\theoremstyle{plain}
\newtheorem{definition}{Definition}
\newtheorem{example}{Example}
\newtheorem{lemma}{Lemma}
\newtheorem{remark}{Remark}
\newtheorem{theorem}{Theorem}
\numberwithin{equation}{section}
\begin{document}
\title[A new representation of Links: Butterflies]{A new representation of Links: Butterflies}
\author{H. M. Hilden}
\address{University of Hawaii at Honolulu}
\email{mike@math.hawaii.edu}
\author{J. M.\ Montesinos}
\curraddr{Universidad Complutense de Madrid}
\email{montesin@mat.ucm.es}
\author{D. M. Tejada }
\curraddr{Universidad Nacional de Colombia, Sede Medell\'{\i}n}
\email{dtejada@unalmed.edu.co}
\author{M. M. Toro}
\curraddr{Universidad Nacional de Colombia, Sede Medell\'{\i}n}
\email{mmtoro@unalmed.edu.co}
\thanks{}
\date{February 2012}
\subjclass[2000]{Primary 57M25, 57M27 }
\keywords{Links, bridge presentation, bridge number, butterfly, butterfly number}

\begin{abstract}
With the idea of an eventual classification of 3-bridge links,\ we define a
very nice class of $3$-balls (called butterflies) with faces identified by
pairs, such that the identification space is $S^{3},$ and the image of a
prefered set of edges is a link. Several examples are given. We prove that
every link can be represented in this way (butterfly representation). We
define the butterfly number of a link, and we show that the butterfly number
and the bridge number of a link coincide. This is done by defining a move on
the butterfly diagram. We give an example of two different butterflies with
minimal butterfly number representing the knot $8_{20}.$ This raises the
problem of finding a set of moves on a butterfly diagram connecting diagrams
representing the same link. This is left as an open problem.

\end{abstract}
\maketitle

\section{\label{Intro}Introduction}

The beautiful classification of $2$-bridge links by rational numbers has not
yet been generalized to 3-bridge links. One of the goals of this paper is to
introduce a tool that eventually might lead to a generalization of this classification.

It is well known \cite{seifert} that every closed, orientable $3$-manifold can
be obtained by pasting pairs of faces of a polygonization of the boundary
$S^{2}$ of a closed $3$-cell $\mathbf{B}^{3}$.

Thurston's construction of the borromean rings, \cite{Th1} and \cite{Th2}, is a nice
example that we generalize for all links in this paper, Fig. \ref{borro}. In
this example we notice that the cube is actually a closed $3$-cell
$\mathbf{B}^{3},$ with twelve faces on its boundary that are identified by
reflections along some axes (double arrows). Moreover, pasting the faces of
the cube we obtain $S^{3}$ and the set of axes become the borromean rings.

These reflections resemble the way a butterfly closes its wings, and we will
say that the borromean rings have a $6$-butterfly representation, and the six
faces of the real cube are the six butterflies involved.

\begin{figure}
[h]
\begin{center}
\includegraphics[
height=1.8325in,
width=4.1926in
]%
{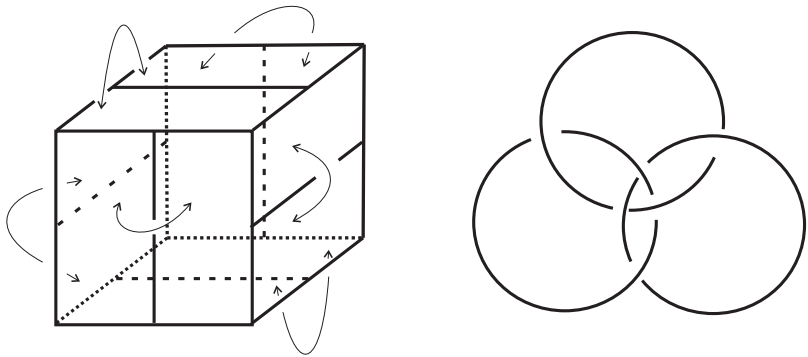}%
\caption{Borromean rings.}
\label{borro}
\end{center}
\end{figure}

Similarly to the borromean rings, the $2$-bridge knots or rational links $p/q$
can be obtained by pasting the northern and southern hemispheres of $S^{2}$
with themselves by reflections through half meridians separated apart $2\pi
q/p.$ For instance, Fig. \ref{fig16} depicts this construction for $p/q=3/1$,
the trefoil knot. As in Thurston's example, $S^{3}$ is obtained by pasting the faces. We say that the rational
link $p/q$ has a $2$-butterfly representation, and the northern and southern
hemispheres of $S^{2}$ are the two butterflies involved.

This butterfly representation of $p/q$ has two main advantages. First, it is a
pure $2$-dimensional diagram, and secondly, it exhibits directly the rational
number $p/q$ that classifies the knot or link.

With these two properties in mind, we wondered if all knots and links have a
similar structure, allowing two or more butterflies on the boundary $S^{2}$ of
$\mathbf{B}^{3}.$ One such structure with three butterflies is depicted in
Fig. \ref{fig2}b.

It turns out that every knot or link admits such a representation. We prove
this fact here. In Sections \ref{ButLink} and \ref{LinkBut} we give algorithms
to pass back and forth from a link to a butterfly representation of it.

We define accordingly the butterfly number of a knot or link and we prove that
it coincides with its bridge number (Section \ref{number}). To obtain this
last result we need to reduce the number of butterflies of a particular
butterfly representation of a link. This involves the definition of a move
that does precisely this. See Section \ref{Move}.

As each $m$-bridge link diagram has an $m$-butterfly representation, a natural
question arises: \textit{Is it possible to associate a set of rational numbers
to describe this butterfly}? In the case $m=3$ this assignation can, in fact,
be made \cite{HMTT4}, where a triple of rational numbers is associated to each
3-butterfly. In this paper we show some examples of $3$-butterflies and its
corresponding set of rational numbers.

We give many examples and in particular two different $3$-butterfly
representations of the same knot $8_{20}.$ This raises the problem of relating
$3$-butterfly representations by a set of potential moves. This is left as an
open problem. Using the concept of $3$-butterfly, we hope to obtain a
classification of $3$-bridge links, similar to the Schubert classification of
$2$-bridge links.

In Section \ref{Def} we present a technical definition of an $m$-butterfly
even though in the rest of the paper, for simplicity, we speak more
intuitively about $m$-butterflies.

In the last decade, Kauffman \cite{Ka} has been developing the theory of
virtual knots. This theory has several applications. The technical definition
of an $m$-butterfly is used intensively in \cite{HMTT5} where we prove that
any virtual knot also admits a representation by a generalized $(n,g)$%
-butterfly, that is a handlebody of genus $g$ with $2n$ faces on its boundary
that are identified by reflections along some axis.

As we have remarked above, pasting the faces of an $m$-butterfly gives the $3
$-sphere $S^{3}$. Section \ref{S3} is devoted to showing this fact. In
general, this result is not true for generalized $(n,g)$-butterflies that
represent virtual knots.

\section{\label{Def}Butterflies: Definitions and Examples}

Intuitively, an $m$-butterfly is a $3$-ball $\mathbf{B}^{3}$ with $m$ $>0$
polygonal faces on its boundary $S^{2}=\partial\mathbf{B}^{3},$ such that each
face $P$\ is subdivided by an arc $t_{P}$\ in two subfaces (that have the same
number of vertices) that are identified by a ''reflection'' along this arc
$t_{P}.$

In order to formalize this concept,\ we give some technical definitions.

Let $R$ be a connected graph embedded in $S^{2}=\partial\mathbf{B}^{3},$ where
$\mathbf{B}^{3}$ is a closed $3$-cell, so that $S^{2}-R$ is a disjoint union
of open 2-cells. For our purposes we assume that $\mathbf{B}^{3}$ is the half
ball $x^{2}+y^{2}+z^{2}\leq r^{2};z\leq0,$ and that the graph $R$ and later
the graph $R\cup T$, when $T$ has been defined, is contained in the planar
part of $\mathbf{B}^{3},\mathbb{R}^{2}\times\{0\}.$ The edges in $R$ and in
$T$ are simple arcs. However, by \cite{F1}, for any such graph $R\cup T$ there
is an autohomeomorphism of $S^{2}$ such that the images of the edges are
straight planar line segments. We shall assume, in the proofs of theorems that
follow, but not in the drawn figures, that the edges of $R\cup T$ are straight
planar line segments. We denote each open $2$-cell generically by $P.$ We
would like to parameterize each $2$-cell $P$.

For any $n\in\mathbb{N}$, let $P_{2n}$ be the regular polygon that is the
closed convex hull of the $2n^{th}\ $roots of unity. We define a
parameterization of $P$ to be a function $f$ from $P_{2n}$ to the closure
$\overline{P}$ of $P,$ with the following properties:

a) The restriction of $f$ to interior $P_{2n}$ is a homeomorphism from
interior $P_{2n}$ to $P.$

b) The restriction of $f$ to an edge of $P_{2n}$ is a piecewise linear
homeomorphism from that edge to an edge in the graph $R.$

c) $f$ as a map from the edges of $\partial P_{2n}$ to the edges of $\partial
P$ is at most 2 to 1.

The existence of a parameterization of $P$ places restrictions on $P$ and on
$R.$ We will assume that $R$ is such that each $P$ has a parameterization
$f:P_{2n}\rightarrow\overline{P},$ and we fix a parameterization $f_{P}$ for
each $P.$

Complex conjugation, $z\rightarrow\overline{z},$ restricted to $P_{2n}$ or to
boundary of $P_{2n}$ defines an involution and an equivalence relation on the
edges and vertices of $P_{2n}$, and this in turn, induces an equivalence
relation on the edges and vertices of $\overline{P},$ and on the points of $P
$ as well. That is to say for $A$ and $B$ points of $\overline{P}$, $A\sim B$
if $f_{P}^{-1}\left(  A\right)  =f_{P}^{-1}\left(  B\right)  $ or $f_{P}%
^{-1}\left(  A\right)  =\overline{f_{P}^{-1}\left(  B\right)  },$ where
$\overline{f_{P}^{-1}\left(  B\right)  }=\left\{  \overline{z}/z\in f_{P}%
^{-1}\left(  B\right)  \right\}  .$

The equivalence relation on the edges and vertices of each $\overline{P}$
induces an equivalence relation on the graph $R.$ That is $x\simeq y$ if and
only if there exists a finite sequence $x=x_{1},\cdots,x_{l}=y$ with
$x_{i}\sim x_{i+1}$ for $i=1,\cdots,l-1.$ Equivalence classes of points of $P
$ contain two points except for those points in $f\left(  \left[  -1,1\right]
\right)  $ where there is only one point. Note that if $x$ is a vertex of $R,$
its complete class under the equivalence relation $\simeq$ is composed
entirely of vertices.

Figures \ref{fig1a} and \ref{fig1b} illustrate two different
parameterizations. In Fig. \ref{fig1a} \ we have $f(1)=f(5)$ and $f(2)=f(4);$
and in Fig. \ref{fig1b} we have $f\left(  0\right)  =f\left(  6\right)
,f\left(  1\right)  =f\left(  5\right)  $ and $f\left(  2\right)  =f\left(
4\right)  .$%

\begin{figure}
[ptb]
\begin{center}
\includegraphics[
height=1.701in,
width=4.4952in
]%
{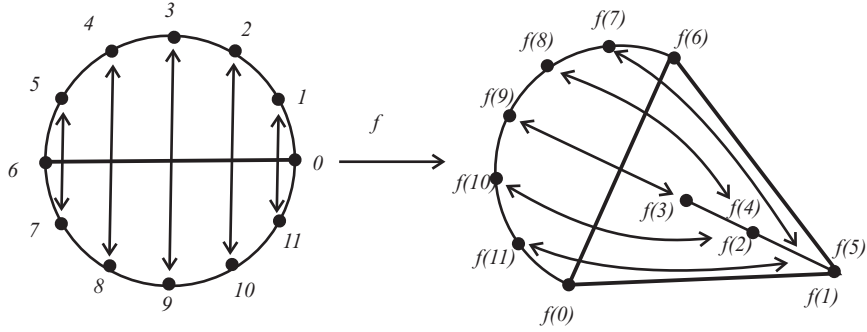}%
\caption{$f$ parameterizes a pair $(P,t)$.}%
\label{fig1a}%
\end{center}
\end{figure}

\begin{figure}
[h]
\begin{center}
\includegraphics[
height=1.9298in,
width=4.5798in
]%
{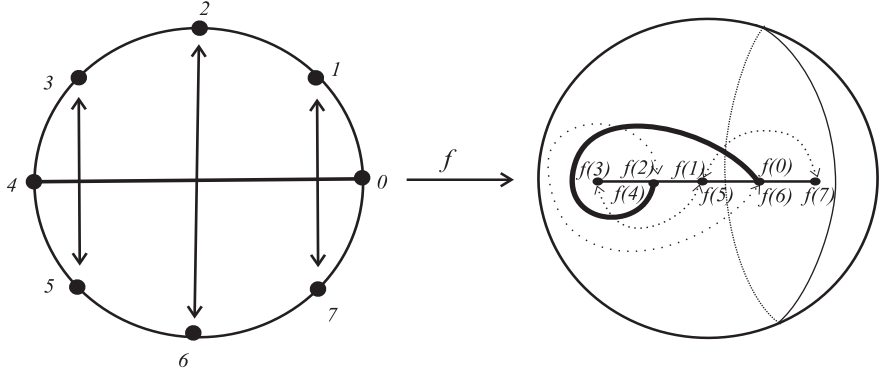}%
\caption{$f$ parametrizes a $1$-butterfly. }%
\label{fig1b}%
\end{center}
\end{figure}

Each $P_{2n}$ contains the line segment $\left[  -1,1\right]  $ which is the
fixed point set of complex conjugation restricted to $P_{2n}.$ The image of
this line segment $f_{p}\left(  \left[  -1,1\right]  \right)  $ is called the
\textit{trunk} $t$. A pair $\left(  P,t\right)  $ will be called a
\textit{butterfly with trunk} $t.$ The \textit{wings} $W$ and $W^{\prime}$ are
just $f_{P}\left(  P_{2n}\cap upper\ half\ plane\right)  $ and $f_{P}\left(
P_{2n}\cap lower\ half\ plane\right)  $ and $W\cap W^{\prime}=t.$ Each time
that we consider a trunk $t$ we are implicitly considering the equivalence
relation described above. We denote by $T$ the collection of all trunks $t$
(over all $P).$ Notice that the boundaries of the $n$ butterflies form a graph
$R$ on $S^{2}=\partial\mathbf{B}^{3}.$ As before, (See \cite{F1}), we can
assume the edges in the graph $R\cup T$ as straight line segments.

Let us denote by $M(R,T)$ the space $\mathbf{B}^{3}/\simeq$ with the topology
of the identification map $p:\mathbf{B}^{3}\rightarrow M(R,T)$.

As in Thurston's example, we would like that the image of $T,p(T),$ became a knot or link. In
order to guarantee this fact, we distinguish three types of vertices on $R.$

A member of $R\cap T$ will be called an $A$\emph{-vertex}. A member of
$p^{-1}\left(  p\left(  v\right)  \right)  ,$ $v\in R\cap T,$ which is not an
$A$-vertex will be called an $E$\emph{-vertex}. A vertex of $R$ which is not
an $A$-vertex nor an $E$-vertex will be called a $B$\emph{-vertex }iff
$p^{-1}\left(  p\left(  v\right)  \right)  $ contains at least one
non-bivalent vertex of $R$.

We do not give an explicit name for those vertices that are neither $A,B$ nor
$E$-vertices. Of course it is possible to construct $3$-balls with polygonal
faces on their boundaries with those kind of vertices but for our purposes (we
want to represent knots or links) it is enough to consider graphs without
them. There are also interesting examples in which there are $E$-vertices that
are not bivalent, as the one shown in Fig. \ref{exemike}, but for our purpose
we do not consider them as $m$-butterflies. In further research we will
consider some generalization of our construction.%

\begin{figure}
[h]
\begin{center}
\includegraphics[
height=1.6164in,
width=2.334in
]%
{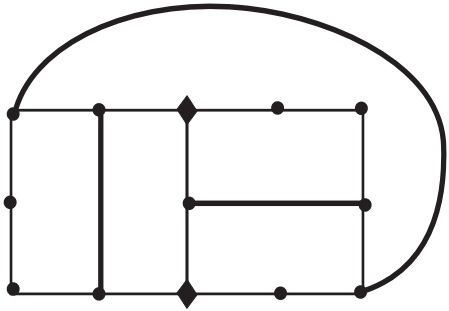}%
\caption{In this poligonization, the vertices marked with $\blacklozenge$ are
trivalent $E$-vertices of $R$. }%
\label{exemike}%
\end{center}
\end{figure}

With these definitions we formalize our intuitive definition of $m$-butterfly,
given at the beginning of this section.

\begin{definition}
For $m\geq1$, an $m$-\textit{butterfly} is a $3$-ball $\mathbf{B}^{3}$ with
$m$ butterflies $(P,t)$ on its boundary $S^{2}=\partial\mathbf{B}^{3},$ such
that (i) the graph $R$ has only $A$-vertices, $E$-vertices and $B$-vertices;
(ii) the $A$- and $E$-vertices are bivalent in $R,$ and (iii) $T$ has $m$ components.
\end{definition}

Moreover, an $m$-butterfly can be represented by a planar graph (or by an
$m$-\textit{butterfly diagram}), denoted by a pair $\left(  R,T\right)  ,$
such that conditions (i), (ii), and (iii) are satisfied. The $m$-butterfly
represented by the diagram $(R,T)$ is also denoted by $(R,T).$

\begin{example}
Figure \ref{fig2} depicts three different butterfly diagrams. Fig \ref{fig2}b
represents a $3$-butterfly. The full equivalence class of the two trivalent
vertices $0$ and $\infty$ on it are $B$-vertices. Fig. \ref{fig2}a shows a
$2$-butterfly that has only $A$ or $E$-vertices, while the $1$-butterfly given
in \ref{fig2}c has only two $A$-vertices and three $B$-vertices.
\end{example}

In the examples of Fig. \ref{fig2} we will assume that $\mathbf{B}^{3}$ is the
closed $3$-cell that lies over the paper in $\mathbb{R}^{3}+\infty$. The
members of $T$ will be displayed as thick lines. The $B$-vertices are depicted
by *. See \ref{fig2}b and c. The other vertices of the diagram are either
boundaries of members of $T$ ($A$-vertices) or $E$-vertices.%

\begin{figure}
[h]
\begin{center}
\includegraphics[
height=1.5904in,
width=5.0713in
]%
{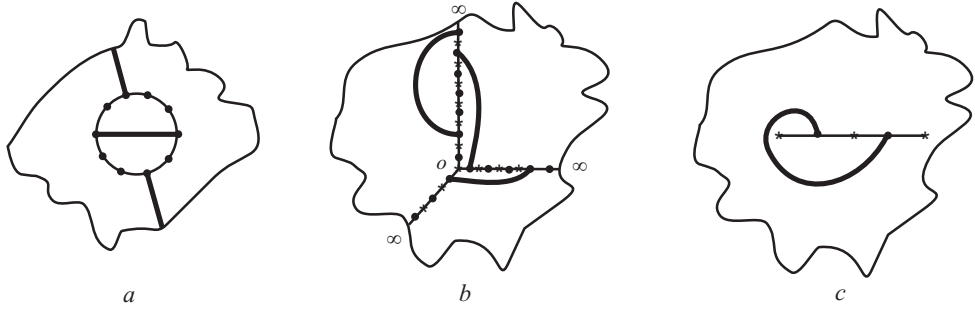}%
\caption{Representing butterflies with planar graphs.}%
\label{fig2}%
\end{center}
\end{figure}

\section{\label{S3}The Quotient Space $M(R,T)$ is $S^{3}$}

In this section we are going to prove that under our definitions, the space
$M(R,T)$ is $S^{3}$ and that the image of $T$ under the identification map $p$
is a knot (or link). So we are sure to obtain a knot (or link) inside $S^{3}$
when we make the identifications by the equivalence relation$.$

\begin{theorem}
\label{teos3}For any $m$-butterfly $(R,T),$ the space $M\left(  R,T\right)  $
is homeomorphic to $S^{3}$ and $p\left(  T\right)  $ is a knot or a link,
where $p:\mathbf{B}^{3}\rightarrow M(R,T)$ is the identification map.
\end{theorem}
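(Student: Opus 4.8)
The plan is to verify the claim locally and then globally. Since $M(R,T)$ is a quotient of a $3$-ball by identifications that only affect the boundary $S^2$, the interior of $\mathbf{B}^3$ maps homeomorphically into $M(R,T)$, so all the topology happens at the boundary. The strategy is to show that $M(R,T)$ is a closed $3$-manifold — which by the classical result cited in the introduction (pasting faces of a polygonization yields a closed orientable $3$-manifold) reduces to checking that the identification space is a manifold near every boundary point — and then to pin down \emph{which} closed $3$-manifold it is by computing enough of its structure (Euler characteristic, or a direct recognition of the cell structure) to conclude it is $S^3$.

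First I would stratify the boundary and check the manifold condition point by point. For a point in the interior of a wing, $p$ is $2$-to-$1$ onto a point whose neighborhood is manifestly a $3$-ball (two half-balls glued along a disk), so these are interior points of $M(R,T)$. For a point on a trunk $t$ but not a vertex (the fixed-point set of the conjugation), $p$ is $1$-to-$1$ and the "reflection" folds the two wings of that single butterfly together; a neighborhood is again a $3$-ball, and such points form the arcs that will become the link. The delicate cases are the vertices: at an $A$-vertex (endpoint of a trunk) and at an $E$-vertex, the hypotheses (i)–(iii) of the definition — in particular that $A$- and $E$-vertices are \emph{bivalent} — are exactly what is needed to guarantee the link of the point in $M(R,T)$ is a $2$-sphere. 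I would carry out the local-link computation at each vertex type, using that the equivalence class of an $A$- or $E$-vertex contains only vertices (as noted after the equivalence relation is defined) and that the valences are controlled, so the cone neighborhoods assemble into a ball rather than into a more complicated cone.

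Next I would handle the $B$-vertices. Around a $B$-vertex several butterfly corners are glued together in a cycle, and one must check that the union of the corresponding solid-angle sectors closes up to form a full $3$-ball neighborhood — equivalently, that walking around the vertex through the identifications returns to the start after sweeping out total solid angle $4\pi$ with no branching. This is the step where the combinatorics of the identification $\simeq$ really enters, and it is the step I expect to be the main obstacle: one must rule out that the cyclic gluing produces a pseudomanifold point (a cone on a higher-genus surface or on several spheres) rather than a genuine manifold point. The \textbf{key} lemma here is that the link of each $B$-vertex is a single $S^2$, which I would prove by an Euler-characteristic count on the link graph together with the bivalence of the $A$- and $E$-vertices lying on the sphere's equatorial circles.

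Having established that $M(R,T)$ is a closed orientable $3$-manifold, I would finish by identifying it as $S^3$ and verifying that $p(T)$ is a link. For the latter: away from vertices $p(T)$ is a disjoint union of arcs; at $A$-vertices two trunk-ends meet and at $B$-vertices trunk-ends are cyclically matched, and bivalence forces each image vertex to have exactly two incident $p(T)$-edges, so $p(T)$ is a disjoint union of simple closed curves, i.e. a knot or link with $\#$components equal to the number of components of $T$ after identification. To recognize $M(R,T)\cong S^3$, rather than invoke a global theorem I would exhibit it concretely: the half-ball $\mathbf{B}^3$ with the boundary foldings is the suspension-type picture already used for the rational-link examples in the introduction, and I would argue (e.g.\ by constructing an explicit Heegaard splitting of genus $0$, or by collapsing/handle-cancellation on the induced cell structure) that the resulting manifold is simply connected and hence, being a closed orientable $3$-manifold built this way, is $S^3$. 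The cleanest route is probably to show directly that the image of the two wings of each butterfly gives a disk that, together with the folding, builds $S^3$ as a union of $3$-balls glued along $2$-spheres, generalizing the trefoil computation of Fig.\ \ref{fig16}.
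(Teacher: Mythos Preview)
Your local-to-global strategy is reasonable in outline but departs from the paper's argument, and it contains two genuine gaps.

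\textbf{Identifying $S^3$.} Even if your vertex-link analysis succeeds and shows $M(R,T)$ is a closed orientable $3$-manifold, you still owe a proof that it is $S^3$, and none of your proposed routes is actually carried out. ``Simply connected hence $S^3$'' is the Poincar\'e conjecture, which you say you want to avoid; and you give no mechanism for producing a genus-$0$ Heegaard splitting or for writing $M(R,T)$ as $3$-balls glued along $2$-spheres---the trefoil picture does not generalize in any obvious way to $m>2$. The paper sidesteps this entirely by \emph{reversing your logic}. It does not first check that vertex links are spheres. Instead it removes open regular neighbourhoods of all vertices and of all edges of $R\cup T$; what is left is a ball with $2m$ boundary disks identified in pairs, i.e.\ a handlebody $M^{\ast\ast}$. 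Reattaching the trunk-edge neighbourhoods as $2$-handles, each attaching circle meets the corresponding cocore disk $p(\check W_i)$ transversely once, so these $m$ handles cancel and one is back to a $3$-ball. Reattaching the $R$-edge neighbourhoods then yields a punctured $3$-ball. Only \emph{now} does one read off that the vertex links are spheres---they are the boundary components of a punctured ball---and filling with the vertex cones gives $S^3$. Thus the manifold condition at the vertices and the identification with $S^3$ are obtained simultaneously from one handle-cancellation; your Euler-characteristic computation at $B$-vertices is never needed.

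\textbf{Showing $p(T)$ is a link.} Your sketch here is confused: $B$-vertices are by definition disjoint from $T$ (they are not in any trunk and not equivalent to any $A$-vertex), so ``at $B$-vertices trunk-ends are cyclically matched'' has no content. What must be shown is that the $\simeq$-class of every $A$-vertex contains exactly \emph{two} $A$-vertices (together with possibly many $E$-vertices), and bivalence in $R$ by itself does not give this---the class can be long, threading through many butterflies via $E$-vertices. The paper handles this by building an auxiliary planar graph $\Gamma$ inside the faces: for each non-trunk-endpoint $A$- or $E$-vertex $v$ on $\partial P$ one draws the chord $f_P(L(w))$ across $P$ to its reflected partner. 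Bivalence makes $A$-vertices monovalent and $E$-vertices bivalent in $\Gamma$, so each component of $\Gamma$ is an arc whose vertex set is a full $\simeq$-class with exactly two $A$-endpoints. That is the missing combinatorial lemma.
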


\begin{proof}
Set\thinspace$M=M\left(  R,T\right)  \ $for shortness. Let $R^{\ast}=p(R)$,
$T^{\ast}=p(T)$ and $V^{\ast}=p(V)$, where $V$ is the set of vertices of $R. $
Let $U(V^{\ast})$ be a regular neighbourhood of $V^{\ast}$ in the space
$M=M\left(  R,T\right)  .$ Then $U(V^{\ast})$ is a disjoint union of regular
neighbourhoods (we choose $U(V^{\ast})$ as small as we need) of the vertices
of $V^{\ast}$. Let $v^{\ast}\in V^{\ast}$ be one of these vertices. Of course
any regular neighbourhood of $v^{\ast}$ is the cone over an orientable surface
$\Sigma_{v^{\ast}}.$

\textbf{Claim 1: }The surface $\Sigma_{v^{\ast}}$ is connected.

\textbf{Proof: }Consider the subset $p^{-1}\left(  v^{\ast}\right)  $ of the
set $V.$ Let $v\in p^{-1}\left(  v^{\ast}\right)  .$ A regular neighbourhood
of $v$ in $\mathbf{B}^{3}$ is a cone from $v$ over a $2$-disk $\Delta_{v}$
properly embedded in $\mathbf{B}^{3}$. Denote this cone by $C\left(
v,\Delta_{v}\right)  .$ It is possible to select the regular neighbourhood of
members of $p^{-1}\left(  v^{\ast}\right)  $ so that
\[
\Sigma_{v^{\ast}}=
{\textstyle\bigcup_{v\in p^{-1}\left(  v^{\ast}\right)  }}p\left(  \Delta_{v}\right).
\]
Now, if $v_{1},v_{2}\in p^{-1}\left(  v^{\ast}\right)  $ then $v_{1}\simeq
v_{2},$ so there exits a finite sequence of vertices of $p^{-1}\left(
v^{\ast}\right)  $ say $u_{1}=v_{1},u_{2},\cdots,u_{k}=v_{2}$ such that
$u_{i}\sim u_{i+1},i=1,\cdots,k-1$. If we assume that $u_{i},u_{i+1}$ belong
to some $\overline{P},$ where $(P,t)$ is the corresponding butterfly, then the
boundary of $\Delta_{u_{i}}\cap P$ and $\Delta_{u_{i+1}}\cap P$ are also
identified and it follows that $p\left(  \Delta_{u_{i}}\right)  \cup p\left(
\Delta_{u_{i+1}}\right)  $ is a connected set. From this, the claim follows easily.

We continue with the proof of the theorem. The closure of $M\smallsetminus
U\left(  V^{\ast}\right)  $ is clearly a compact, connected $3$-manifold
$M^{\ast}$ with boundary $\partial M^{\ast}=
{\textstyle\bigcup_{v^{\ast}\in V^{\ast}}}\Sigma_{v^{\ast}}.$ 
The closure in $M^{\ast}$ of the set $R^{\ast
}\smallsetminus U\left(  V^{\ast}\right)  $ (resp. $T^{\ast}\smallsetminus
U\left(  V^{\ast}\right)  $) is a set of disjoint, properly embedded arcs in
$M^{\ast}$ that will be denoted by $R^{\ast\ast}$ (resp. $T^{\ast\ast}$).

Now drill from $M^{\ast}$ a regular neighbourhood $U(R^{\ast\ast})\cup
U\left(  T^{\ast\ast}\right)  $ of $R^{\ast\ast}\cup T^{\ast\ast}$ and take
the closure $M^{\ast\ast}$ of the result. Then $M^{\ast\ast}$ is the image
under $p$ of the ball $C=\mathbf{B}^{3}\smallsetminus U\left(  R\cup T\right)
,$ where $U\left(  R\cup T\right)  $ is a suitable regular neighbourhood of
$R\cup T.$ The set $\partial\mathbf{B}^{3}\smallsetminus U\left(  R\cup
T\right)  $ is a system $\left\{  \check{W}_{1},\check{W}_{1}^{^{\prime}%
},\cdots,\check{W}_{m},\check{W}_{m}^{^{\prime}}\right\}  $ of $2m$ disks in
$\partial C.$ Here $\check{W}_{i},\check{W}_{i}^{^{\prime}} $ are contained in
the wings $W_{i},W_{i}^{^{\prime}}$ of the butterfly $P_{i}\ $with trunk
$t_{i}$ and $p$ identifies $\check{W}_{i},\check{W}_{i}^{^{\prime}}$. Thus
$M^{\ast\ast}$ is a handlebody. Therefore $M=M^{\ast\ast}\cup U\left(
T^{\ast\ast}\right)  \cup U(R^{\ast\ast})\cup U(V^{\ast}).$ The set $U\left(
T^{\ast\ast}\right)  $ is a set of $m $ $2$-handles that are attached to the
handlebody $M^{\ast\ast}.$ The attaching spheres for these $2$-handles are
meridians $\mu_{1},\cdots,\mu_{m}$ of $p\left(  t_{1}\right)  ,\cdots,p\left(
t_{m}\right)  $. Then $\mu_{i}$ cuts $p\left(  \check{W}_{i}\right)  =p\left(
\check{W}_{i}^{^{\prime}}\right)  $ transversely in just one point. Therefore
$M^{\ast\ast}\cup U\left(  T^{\ast\ast}\right)  $ is a $3$-ball $C^{3}$. Thus
\[
M=C^{3}\cup U(R^{\ast\ast})\cup U(V^{\ast}).
\]
Since $U(R^{\ast\ast})$ are $2$-handles attached to $C^{3}$ it follows that
$C^{3}\cup U(R^{\ast\ast})$ is a punctured $3$-ball. Since the boundary of
$C^{3}\cup U(R^{\ast\ast})$ and $U(V^{\ast})$ coincide, it follows that
$\partial U(V^{\ast})$ is a disjoint union of spheres. From the above claim,
it follows that $U(V^{\ast})$ is a disjoint union of cones over spheres. That
is, $U(V^{\ast})$ is a disjoint union of balls. Then $M$ is homeomorphic to
$S^{3}.$

To prove that $p\left(  T\right)  $ is a knot or a link, it is enough to show
that $p^{-1}(p(v)),$ for every $A$-vertex $v$, contains exactly two
$A$-vertices. To prove this we construct the following graph $\Gamma.$

Assume that the $3$-cell $\mathbf{B}^{3}$ is the upper half space
$\mathbb{R}_{+}^{3}$ of $\mathbb{R}^{3}+\infty$, and that the graph $R$ lies
in its boundary $\mathbb{R}^{2}\times\left\{  0\right\}  .$

Let $(P,t)\ $be a butterfly of $(R,T)$ and let $f_{P}:P_{2k}\rightarrow
\overline{P}$ be its fixed parameterization. Let $w_{1},w_{2},\cdots,w_{2r}$
be the vertices of $P_{2k}$ and let $v_{j}=f_{P}(w_{j})$ be the vertices of
$\partial P$. For a vertex $w_{j}=\cos(k\pi/r)\pm i\sin(k\pi/r),$
$k=1,2,...,r-1,$ let $L(w_{j})$ be the open vertical line segment $(\cos
(k\pi/r)+i\sin(k\pi/r),\cos(k\pi/r)-i\sin(k\pi/r)).$ For each $A$-vertex in
$\partial P$ not in $\partial t\ $and each $E$-vertex $v_{j}=f_{P}(w_{j})$ in
$\partial P,$ take the arc $Q_{v_{j}}=f_{p}(L(w_{j})). $

Denote by $\Gamma$ the union of all possible $Q_{v}$'s for any $v\in R$ that
is an $A$- or $E$-vertex.

\noindent\textbf{Claim 2:} $\Gamma$ is a disjoint union of arcs bounded by $A$-vertices.

\textbf{Proof: \ }(1) Noting that if $v$ is an $A$-vertex then any other
vertex, related to it, is an $A$- or $E$-vertex and it follows that the
vertices of $\Gamma$ are all $A$- or $E$-vertices.

(2) Since by definition the $A$-vertices are bivalent in $R$ and they are end
points of some trunk it follows that they are monovalent vertices of $\Gamma.$

(3) Since by definition the $E$-vertices are bivalent in $R$ and they are not
end points of a trunk it follows that they are bivalent vertices of $\Gamma.$

Thus, each component of the graph $\Gamma$ is linear and it is bounded by two
$A$ vertices.

To finish the proof of the theorem we observe that if $\Gamma_{0}$ is a
component of $\Gamma,$ the set of vertices of $\Gamma_{0}$ form a complete
equivalence class under $\simeq.$ Therefore, $p^{-1}\left(  p\left(  v\right)
\right)  $ for every $A$-vertex $v$ contains exactly two $A$-vertices. Hence
the graph $p\left(  T\right)  $ is a knot or a link.
\end{proof}

\begin{definition}
The knot or link $p(T)$ defined by the $m$-butterfly $\left(  R,T\right)  $
will be denoted by $L(R,T)$, and we say that $L(R,T)$ has the butterfly
representation $\left(  R,T\right)  $ with butterfly number $m,$ or that the
$m $-butterfly diagram $\left(  R,T\right)  $ represents $L(R,T)$.
\end{definition}

\section{\label{ButLink}From the Butterfly to the Link}

In this section we show how to construct the link $L(R,T)$ from an
$m$-butterfly $(R,T).$

Recall, \cite{Mu}, that a regular diagram $D_{L}$ of a link $L$ is an
$m$-\textit{bridge diagram} for the link $L$ if we can divide up $D_{L}$ into
two sets of polygonal curves $O=\left\{  o_{1},o_{2},\cdots,o_{m}\right\}  $
and $U=\left\{  u_{1},u_{2},\cdots,u_{m}\right\}  $ $(m>0)$ such that:

i. $D_{L}=o_{1}\cup o_{2}\cup\cdots\cup o_{m}\cup u_{1}\cup u_{2}\cup
\cdots\cup u_{m}$,

ii. $o_{1},o_{2},\cdots,o_{m}$ are mutually disjoint simple curves,

iii. $u_{1},u_{2},\cdots,u_{m}$ are mutually disjoint simple curves,

iv. At the crossing points of $D_{L},$ $o_{1},o_{2},\cdots,o_{m}$ are segments
that pass \textit{over }at least one crossing\textit{\ }point, while
$u_{1},u_{2},\cdots,u_{m}$ are segments that pass \textit{under }at least one
crossing point\textit{. }

The arcs $o_{1},o_{2},\cdots,o_{m}$ are called \textit{bridges or overarcs}.
We use the notation $D_{L}=\left(  O,U\right)  $ when we want to describe
explicitly the bridge presentation of the link $L.$

Note that, by condition iv., there are link diagrams that are not bridge
diagrams. For instance, a simple closed curve is not a \textit{bridge diagram}
for the trivial knot. In this paper, we follow \cite{ChLi} and we differ from
\cite{NeOk}, where it is considered the trivial knot with no crossing as
having an $m$-bridge diagram, for all $m\in\mathbb{N}.$ When a link $L$ has
unknotted components, we need to take some care about them, in order to obtain
an $m$-bridge diagram of $L$ because no component can be expresed as a union
of only $o's$ or $u's$.
 Actually, we have to make at least one kink to the trivial knot to obtain
a bridge diagram for it.

\begin{definition}
Given a link $L,$ the\textsl{\ }\emph{bridge number of} $L$ is the minimum
number $m\,$among of all possible $m$-bridge diagrams of the link $L.$ It is
denoted by $b(L).$
\end{definition}

For example, the trivial knot has bridge number $1$ (see Fig. \ref{fig5}c).

\begin{lemma}
\label{diagram}Given a link $L$, there exists an $m$-bridge diagram $D_{L}$
for $L,$ such that $D_{L}\ $is connected and has no closed curves.
\end{lemma}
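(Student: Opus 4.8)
The plan is to start from an arbitrary link diagram $D$ for $L$ (which exists since every link has a regular diagram) and massage it into a connected, closed-curve-free $m$-bridge diagram in two stages: first achieve connectivity, then eliminate any simple closed curves (components with no crossings). Throughout I would keep track of the fact that each modification is a planar isotopy or a local Reidemeister-type move that does not change the link type, so the resulting diagram still represents $L$.

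First I would handle connectivity. A link diagram, viewed as a $4$-valent plane graph together with over/under data at the crossings, fails to be connected exactly when the link splits into sub-diagrams occupying disjoint regions of the plane. To merge two disjoint pieces $D_1$ and $D_2$, I would perform a ``finger move'': push an arc of $D_1$ toward an arc of $D_2$ and slide it across, creating two new crossings where the finger of $D_1$ passes over (or under) an arc of $D_2$. This is a planar isotopy of the link in $S^3$ (it is a Reidemeister II move in reverse, adding a clasp-free pair of crossings that cancel), so $L$ is unchanged, but the underlying $4$-valent graph now has the two components joined by the new bigon of crossings. Iterating this finitely many times over all pairs of components of the diagram produces a connected diagram $D'$ for $L$.

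Next I would remove closed curves. After the connectivity step, the only way a simple closed curve can survive as a component of $D'$ is if some component of $L$ has been drawn as a crossing-free loop (the issue flagged just before the lemma: an unknotted component that is a union of neither $o$'s nor $u$'s). For each such crossing-free loop I would introduce a kink via a Reidemeister I move, adding exactly one crossing and splitting that loop into an overarc and an underarc so condition (iv) of the bridge-diagram definition can be met; to also keep the loop attached to the rest of the (now connected) diagram I would combine this with a finger move so the kinked loop shares a crossing with a neighbouring strand. Since Reidemeister I preserves $L$ and the added crossing destroys the closed curve, after finitely many such moves no component is a simple closed curve.

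Finally I would check that the resulting diagram genuinely is an $m$-bridge diagram for some $m$: partition the arcs into overarcs $O$ and underarcs $U$ by walking along each component and declaring the maximal arcs that pass over at least one crossing to be the $o_i$ and the complementary arcs to be the $u_j$, which is possible precisely because no component is crossing-free. I expect the main obstacle to be bookkeeping rather than depth: one must verify that the finger and kink moves can always be performed simultaneously without creating new closed curves or disconnecting the diagram, and that the over/under assignment after all moves still satisfies conditions (ii)--(iv) (mutually disjoint simple overarcs and underarcs). The cleanest way to control this is to do all merging first on the shadow (the underlying $4$-valent graph), fix the connectivity and remove crossing-free loops there, and only afterwards restore the crossing information, observing that none of the moves used alters the link type $L$.
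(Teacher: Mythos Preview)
Your proposal is correct and follows essentially the same approach as the paper: the paper's proof consists entirely of pointing to two figures that illustrate precisely the finger move (to connect disjoint pieces) and the kink move (to eliminate crossing-free circles) that you describe. Your writeup is considerably more detailed than the paper's one-sentence argument, but the underlying idea is identical.
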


\begin{proof}
If the diagram has a closed circle that splits or if it is not connected,
apply the moves shown in Figures \ref{fig19a} and \ref{fig19b}.
\begin{figure}
[h]
\begin{center}
\includegraphics[
height=1.3351in,
width=3.9707in
]
{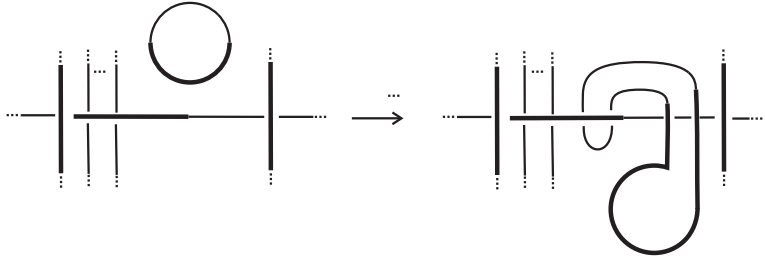}%
\caption{Eliminating closed curves.}%
\label{fig19a}%
\end{center}
\end{figure}

\begin{figure}
[ptbh]
\begin{center}
\includegraphics[
height=1.1149in,
width=3.88in
]
{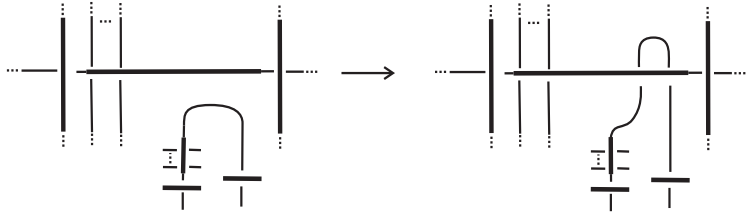}
\caption{Connecting the diagram.}
\label{fig19b}
\end{center}
\end{figure}

\end{proof}

Now, given an $m$-butterfly diagram $\left(  R,T\right)  $ we will describe an
algorithm (the \textit{butterfly-link algorithm}) to construct the link
$L=L(R,T)$. Moreover, we will produce an $m$-bridge diagram for the link
$L\left(  R,T\right)  $.

First of all, consider the following link $K^{\ast}$ of $\mathbb{R}_{+}^{3}$.%

\[
K^{\ast}=\left(  \Gamma\times\left\{  1/2\right\}  \right)  \cup\left(
T\times\left\{  1\right\}  \right)  \cup\left(  \partial T\times\left[
1/2,1\right]  \right)  ,
\]
where $\Gamma$ is the graph defined in the proof of Theorem \ref{teos3}. By
the second claim in the proof of Theorem \ref{teos3}, $\Gamma\times\left\{
1/2\right\}  $ is a disjoint union of arcs lying in $\mathbb{R}^{2}%
\times\left\{  1/2\right\}  .$ Therefore $(\Gamma\times\left\{  1/2\right\}
,T\times\left\{  1\right\}  \cup\left(  \partial T\times\left[  1/2,1\right]
\right)  )$ is an $m$-bridge presentation of the knot (or link) $K^{\ast}$.
This proves the second part of Theorem \ref{teobridge}.

In Fig. \ref{fig3} we illustrate a portion of $K^{\ast}.$ On plane
$\mathbb{R}^{2}\times\{0\}$ we see a component of $\Gamma,\Gamma_{1},$ that is
bounded by two components of $T$ (denoted generically by $T),$ whose
intersection with that $\Gamma_{1}$ is composed of two $A$-vertices (denoted
generically by $A$) and that passes through two $E$-vertices (denoted by $E$).
The points $f,g$ and $h$ are intersections of some components of $T$ with
$\Gamma_{1}$ (we do not depict those components but they are transversal to
$\Gamma_{1}$)$.$%

\begin{figure}
[h]
\begin{center}
\includegraphics[
height=2.0815in,
width=2.6593in
]%
{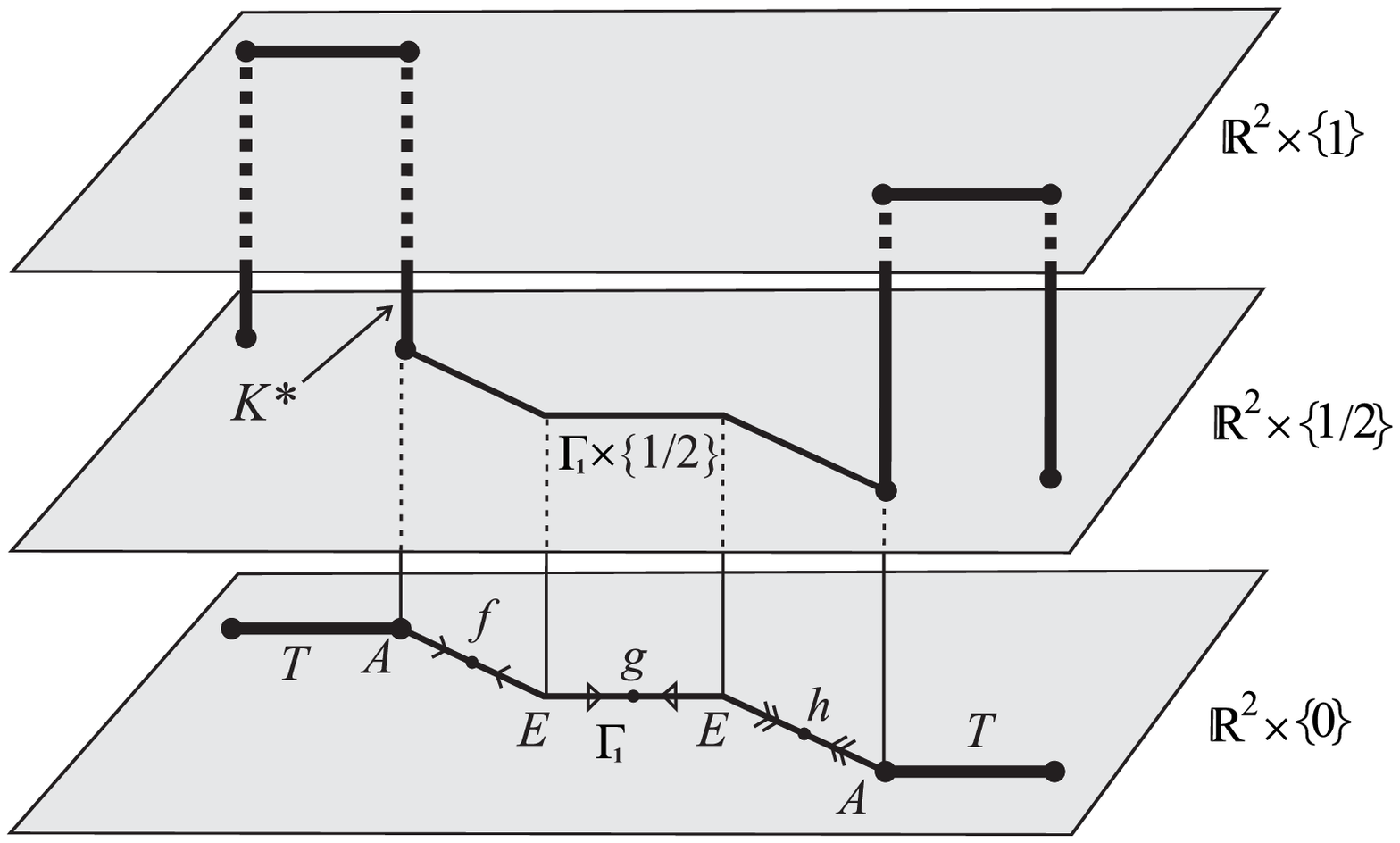}%
\caption{$K^{\ast}=\Gamma\times\left\{  1/2\right\}  \cup T\times\left\{
1\right\}  \cup\partial T\times\left[  1/2,1\right]  $}
\label{fig3}
\end{center}
\end{figure}

\begin{theorem}
\label{teobridge} Given an $m$-butterfly diagram $(R,T)$ the link $L(R,T)$ is
isotopic to $K^{\ast}$. Moreover $(\Gamma\times\left\{  1/2\right\}
,T\times\left\{  1\right\}  \cup\left(  \partial T\times\left[  1/2,1\right]
\right)  )$ is an $m$-bridge presentation of $L(R,T)$.
\end{theorem}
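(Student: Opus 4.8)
The plan is to prove the two assertions separately, noting that the bridge-presentation claim (the ``Moreover'' sentence) has already been verified in the paragraph preceding the statement: $(\Gamma\times\{1/2\},\,T\times\{1\}\cup(\partial T\times[1/2,1]))$ was shown to be an $m$-bridge presentation of the link $K^{\ast}$, so once the isotopy $L(R,T)\simeq K^{\ast}$ is established it is automatically an $m$-bridge presentation of $L(R,T)$ as well. Thus the entire remaining content is the single statement $L(R,T)=p(T)\simeq K^{\ast}$.

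First I would reduce this to an isotopy taking place \emph{inside} $M=M(R,T)$. The key observation is that the equivalence relation $\simeq$ is generated by $\sim$, which is defined only on the boundary faces $\overline{P}\subset S^{2}=\partial\mathbf{B}^{3}$; hence $p$ is injective on the open ball $\operatorname{int}\mathbf{B}^{3}$ and restricts there to an embedding onto an open subset of $M$. Since $K^{\ast}$ is a compact subset of $\operatorname{int}\mathbf{B}^{3}$ (it lies in the slab $1/2\le z\le 1$), it is contained in a closed $3$-ball $B'\subset\operatorname{int}\mathbf{B}^{3}$. Comparing the standard inclusion $B'\hookrightarrow S^{3}$ with the embedding $p|_{B'}\colon B'\hookrightarrow M\cong S^{3}$ and invoking the uniqueness up to ambient isotopy of embedded $3$-balls in $S^{3}$, I conclude that $K^{\ast}$, as a link in the standard $S^{3}$, has the same type as $p(K^{\ast})$ as a link in $M$. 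It therefore suffices to produce an isotopy $p(K^{\ast})\simeq p(T)$ inside $M$.

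Next I would compare the two links combinatorially. Tracing components shows that both $p(K^{\ast})$ and $p(T)$ are assembled by concatenating the trunks, and that they share the \emph{same} gluing pattern on trunk endpoints: by Claim 2 in the proof of Theorem \ref{teos3}, the two $A$-vertices bounding each component $\Gamma_{0}$ of $\Gamma$ form a complete $\simeq$-class, so $p$ identifies exactly these pairs. The only difference is that $p(T)$ joins each identified pair directly at the single point $p(a)=p(b)$, whereas $p(K^{\ast})$ replaces that junction by a \emph{detour}: down a vertical segment, along $p(\Gamma_{0})$ at height $1/2$, and up again. The proof then comes down to showing every detour can be removed by isotopy. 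To this end I would realize $p$ concretely by the ``butterfly folding'', folding the lower wing of each $(P,t)$ up onto the upper wing along the trunk axis. Under this folding each vertical chord $Q_{v}=f_{P}(L(w_{j}))$ making up $\Gamma_{0}$, whose two boundary endpoints are identified under $\sim$, folds into a small hairpin loop based at $p(v)$; consecutive chords share their $E$-vertex basepoints, so $p(\Gamma_{0})$ becomes a chain of such hairpins joining $p(a)$ to $p(b)=p(a)$.

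The main obstacle, and the heart of the argument, is to verify that these hairpins are genuinely trivial, that is, that each detour collapses to its basepoint \emph{without crossing the rest of the link}. Here I would argue locally: each folded chord is a short arc in a small neighbourhood of its butterfly's crease, meeting the trunk of that butterfly only in the single point coming from $L(w_{j})\cap[-1,1]$, and it bounds an evident small disk (the folded image of half a regular neighbourhood of the chord) disjoint from all other trunks and from the other components of $\Gamma$. Shrinking each hairpin across its disk to the basepoint—and correspondingly sliding the vertical segments down—removes every detour simultaneously and carries $p(K^{\ast})$ onto $p(T)$. Combined with the reduction above, this gives $L(R,T)=p(T)\simeq K^{\ast}$, and the over/under data recorded in Fig.\ \ref{fig3} (trunks at height $1$ passing over, $\Gamma$ at height $1/2$ passing under) then exhibits the asserted $m$-bridge presentation. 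The delicate point to get right is the disjointness of the collapsing disks from the other strands, which is precisely where the bivalence of the $A$- and $E$-vertices and the disjointness and planarity of the arcs of $\Gamma$ are used.
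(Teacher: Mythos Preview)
Your overall strategy is sound and matches the paper's: both reduce to showing that the ``detour'' joining each identified pair of $A$-vertices can be collapsed inside $M$. The gap is in your execution of that collapse. The butterfly folding you invoke acts only on $\partial\mathbf{B}^{3}$, i.e.\ on $\Gamma_{0}$ at height $0$; but the piece of $K^{\ast}$ that actually has to be moved is $\Gamma_{0}\times\{1/2\}$, which lies in the \emph{interior} of $\mathbf{B}^{3}$, where $p$ is injective and nothing folds. (Incidentally, $p(Q_{v})$ at height $0$ is not a hairpin loop: the reflection identifies the two halves of $Q_{v}$ pointwise, not merely their endpoints, so the image is a single arc from $p(v)$ to a point of the trunk.) Your argument never produces the $2$-dimensional object across which the height-$1/2$ arc is to be isotoped; ``sliding the vertical segments down'' while ``shrinking each hairpin'' conflates the two levels and does not describe a well-defined isotopy in $M$.

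The paper supplies exactly this missing object. For each component $\Gamma_{1}$ of $\Gamma$ it takes the product strip $\Gamma_{1}\times[0,1/2]\subset\mathbf{B}^{3}$. Since all vertices of $\Gamma_{1}$ lie in one $\simeq$-class (Claim~2 in the proof of Theorem~\ref{teos3}), their $p$-images coincide in a single point $w$, and $p(\Gamma_{1}\times[0,1/2])$ is a cone from $w$ over the arc $p(\Gamma_{1}\times\{1/2\})$. One then pushes $p(\Gamma_{1}\times\{1/2\})$ across this cone onto the two lateral edges $p(\partial\Gamma_{1}\times[0,1/2])$, which are precisely the vertical legs needed to reach $p(T)$ after a final vertical isotopy. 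Had you taken your ``small disk'' to be $p(Q_{v}\times[0,1/2])$ rather than something at height $0$, you would have obtained the triangular pieces of this cone and your chord-by-chord plan would go through; so the idea is salvageable, but the cone is what makes the isotopy honest and the disjointness from the rest of the link automatic.
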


\begin{proof}
Consider a component $\Gamma_{1}$ of $\Gamma.$ It is linear and bounded by two
$A$-vertices. Call $\partial\Gamma_{1}$ the set of these two $A$-vertices. \ 

Consider the subset $\Gamma_{1}\times\left[  0,1/2\right]  $ of$\ \mathbb{\ R}%
_{+}^{3}$. Then $p\left(  \Gamma_{1}\times\left[  0,1/2\right]  \right)  $ is
a cone $C\left(  w,p\left(  \Gamma_{1}\times\left\{  1/2\right\}  \right)
\right)  $ from the point $w=p\left(  \Gamma_{1}^{\left(  0\right)  }%
\times\left\{  0\right\}  \right)  $ over $p\left(  \Gamma_{1}\times\left\{
1/2\right\}  \right)  $ (compare Figures \ref{fig3} and \ref{fig4}) where
$\Gamma_{1}^{\left(  0\right)  }$ is the set of vertices of $\Gamma_{1}.$ We
push $p\left(  \Gamma_{1}\times\left\{  1/2\right\}  \right)  $ along the cone
$C\left(  w,p\left(  \partial\Gamma_{1}\times\left\{  1/2\right\}  \right)
\right)  .$ This we do, as shown in Fig. \ref{fig4}, by an isotopy $H_{i}$
whose final image is just $p\left(  \partial\Gamma_{1}\times\left[
0,1/2\right]  \right)  .$
\begin{figure}
[h]
\begin{center}
\includegraphics[
height=1.9426in,
width=2.1357in
]
{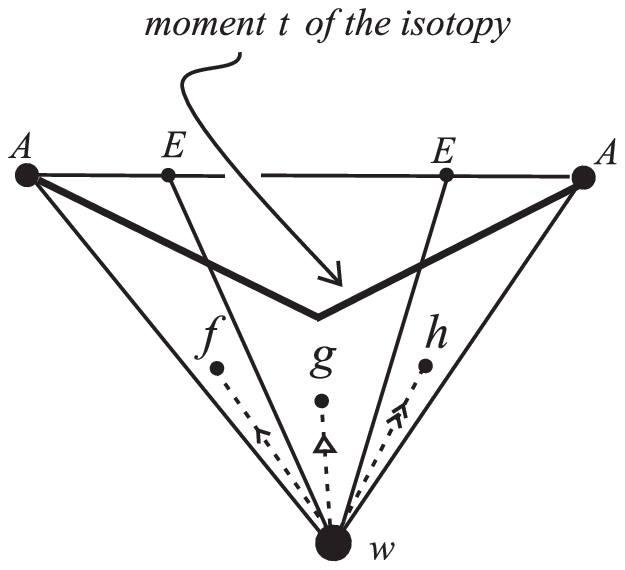}
\caption{Isotopy}
\label{fig4}
\end{center}
\end{figure}

Combining these isotopies $H_{i}$ for all components $\Gamma_{i}$ of $\Gamma$
we obtain an isotopy $H$ sending $K^{\ast}$ onto the set
\[
p\left(  \left(  T\times\left\{  1\right\}  \right)  \cup\left(  \partial
T\times\left[  0,1\right]  \right)  \right)  .
\]
But there is certainly an isotopy $H^{\prime}$ sending $p\left(  \left(
T\times\left\{  1\right\}  \right)  \cup\left(  \partial T\times\left[
0,1\right]  \right)  \right)  $ onto $p\left(  T\times\left\{  0\right\}
\right)  =K. $ This finishes the first part of the proof.
\end{proof}

\textbf{Algorithm (Butterfly-Link algorithm).}

Finally we have:

\begin{itemize}
\item Start with an $m$-butterfly diagram on the plane $\mathbb{R}^{2}%
\times\{0\}$. We want to construct the link $L(R,T)$.

\item Construct the graph $\Gamma\subset$ $\mathbb{R}^{2}\times\{0\}$ as in
the proof of the Theorem \ref{teos3}. See the dotted lines in Fig. \ref{fig5}.

\item Then the link $L(R,T)$ is $\left(  \Gamma\times\left\{  0\right\}
\right)  \cup\left(  T\times\left\{  1\right\}  \right)  \cup\left(  \partial
T\times\left[  0,1\right]  \right)  $.

\item And $(\Gamma\times\left\{  0\right\}  ,T\times\left\{  1\right\}
\cup\left(  \partial T\times\left[  0,1\right]  \right)  )$ is an $m$-bridge
diagram of $L(R,T)$.
\end{itemize}

\begin{example}
Applying the butterfly-link algorithm found in the proof of Theorem
\ref{teobridge} to the three butterfly diagrams of Fig. \ref{fig2} we obtain
the knots of Fig. \ref{fig5}. The knot of Fig. \ref{fig5}a is the knot
$4_{1},$ the knot of Fig. \ref{fig5}b is the knot $8_{20}$ and the knot in
\ref{fig5}c is the trivial knot.
\end{example}

\begin{figure}
[h]
\begin{center}
\includegraphics[
height=1.7513in,
width=5.1102in
]
{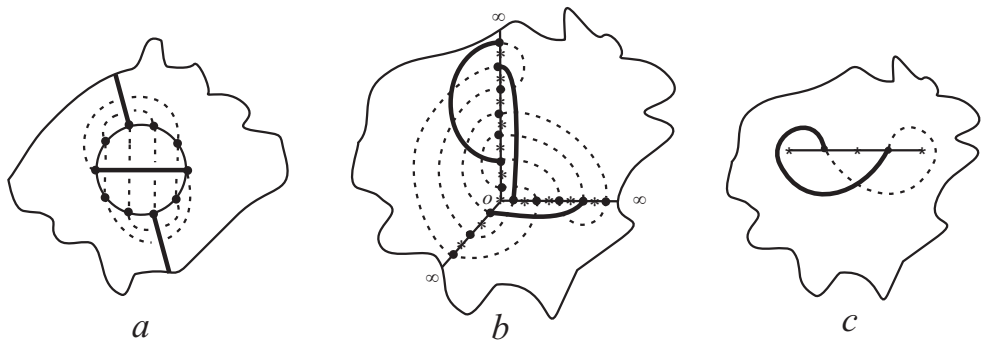}%
\caption{Examples of knots produced by the butterfly-link algorithm.}%
\label{fig5}
\end{center}
\end{figure}

\section{\label{LinkBut}From Links to Butterflies}

Now, in the other direction, we explain how to obtain a butterfly from a given link.

\begin{theorem}
\label{linkbut}Every knot or link can be represented by an $m$-butterfly
diagram, for some $m>0.$ Moreover the $m$-butterfly can be chosen with no $E$-vertices.
\end{theorem}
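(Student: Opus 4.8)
The plan is to reverse the butterfly-link algorithm: start from a minimal bridge diagram of the link and read off a butterfly diagram whose quotient recovers the link. By Lemma \ref{diagram} I may assume the given $m$-bridge diagram $D_L = (O,U)$ is connected and contains no closed curves. The key observation from Theorem \ref{teobridge} is that a butterfly $(R,T)$ produces its link essentially by coning the arcs of $\Gamma$ down to the plane and leaving the trunks $T$ as the overarcs. So my strategy is to arrange the bridge diagram so that the overarcs $O = \{o_1,\dots,o_m\}$ will become the trunks $T$, and the underarcs $U$ will be encoded in the graph $R$ together with the coning structure that the algorithm uses.

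Concretely, I would first isotope $D_L$ into the plane $\mathbb{R}^2\times\{0\}$ and push each overarc $o_i$ slightly up off the plane, so that the whole diagram lives in $\mathbb{R}_+^3$ with the $m$ bridges at height $1$ and everything else at height $0$, matching the target form $(\Gamma\times\{0\})\cup(T\times\{1\})\cup(\partial T\times[0,1])$ of the algorithm. The trunks $t_i$ are then the projections of the $o_i$, and their endpoints $\partial T$ are the $2m$ points where bridges meet underarcs. The plan is to define $R$ as the planar graph whose edges are the underarcs $u_j$ (now playing the role of the $\Gamma$-arcs through which the link runs at level $0$) together with arcs forming the trunks' projections, choosing the vertices so that the endpoints of trunks are $A$-vertices and the remaining crossing data is recorded by the reflection identifications on each face. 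Because I want no $E$-vertices, each face $P$ must be parameterized so that its trunk's reflection identifies the two wings directly, with no intermediate bivalent vertices forced onto the boundary; this is arranged by subdividing $S^2 - R$ into polygonal faces each carrying exactly one trunk.

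The main obstacle I anticipate is verifying that the planar graph $R$ I construct genuinely decomposes $S^2$ into faces each of which admits a valid parameterization $f_P\colon P_{2n}\to \overline P$ in the sense of conditions (a)–(c), with a trunk whose reflection is consistent across shared edges — that is, that the local reflection identifications glue into a globally well-defined equivalence relation $\simeq$ whose $A$-, $B$-, (and no $E$-) vertex structure satisfies Definition 1, and whose quotient returns exactly the original link under the butterfly-link algorithm. This is where the bulk of the work lies: I would argue that the connectedness and closed-curve-free conditions from Lemma \ref{diagram} guarantee that the complementary regions of $R$ in $S^2$ are disks that can each be given a single trunk, so that $T$ has exactly $m$ components, and that the bivalence of $A$-vertices (endpoints of bridges) holds because each bridge endpoint meets precisely one underarc.

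Finally, I would close the loop by applying Theorem \ref{teobridge}: the butterfly $(R,T)$ so constructed has, by the butterfly-link algorithm, associated link $L(R,T) = (\Gamma\times\{0\})\cup(T\times\{1\})\cup(\partial T\times[0,1])$, and by construction this isotopes back to $D_L$, so $L(R,T)$ is the original link $L$. Since the construction never introduces $E$-vertices — every non-$A$, non-$B$ vertex has been eliminated by the choice of faces — the $m$-butterfly has no $E$-vertices, giving the ``moreover'' clause. The number $m$ obtained this way equals the number of bridges in the diagram $D_L$ I started from; taking a minimal bridge diagram will later yield the coincidence of butterfly and bridge numbers proved in Section \ref{number}, but for this theorem any $m>0$ suffices.
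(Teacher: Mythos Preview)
Your proposal has a genuine gap that comes from conflating two different constructions. You want the trunks $T$ to be the $m$ \emph{bridges} (overarcs) of a bridge diagram, and you want no $E$-vertices. These two requirements are incompatible in general, and the paper separates them: Theorem~\ref{linkbut} produces a butterfly whose trunks are \emph{all} the arcs of the planar diagram (so the resulting $m$ is typically much larger than the bridge number), and only later, in Sections~\ref{Move}--\ref{number}, are trunk-reducing moves applied to bring the number of trunks down to the bridge number --- explicitly at the cost of introducing $E$-vertices. Indeed, from Claim~2 in the proof of Theorem~\ref{teos3}, $A$-vertices are monovalent in the auxiliary graph $\Gamma$; if there are no $E$-vertices then every component of $\Gamma$ is a single edge, so every underarc in the resulting bridge presentation passes under exactly one crossing. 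A minimal bridge diagram of, say, the trefoil does not have this property, so your direct construction cannot yield an $m$-butterfly with $m$ equal to the bridge number and no $E$-vertices.

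There is also a concrete misidentification in how you build $R$. You propose that the underarcs $u_j$ become edges of $R$ (``playing the role of the $\Gamma$-arcs''), but $\Gamma$ is not part of the butterfly data at all: it is an auxiliary graph derived \emph{from} the face parametrizations. In the paper's actual construction, the graph $R$ has nothing to do with the underarcs. Instead one chooses a point $B_i$ in each complementary region of the diagram and joins every arc endpoint (these are the $A$-vertices) to the $B$-points of its two adjacent regions; $R$ is the union of these joining arcs, so each face of $S^2\setminus R$ is a neighbourhood of exactly one arc $t_i$, which becomes its trunk. This is the missing idea in your sketch: the $B$-vertices are new auxiliary points, one per region, and $R$ is a ``star'' graph around the arcs, not the underpass arcs themselves. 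With that construction it is immediate that the $A$-vertices are bivalent in $R$, that there are no $E$-vertices, and that the butterfly-link algorithm returns $D_L$.
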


\begin{proof}
Given a link $L$, let $D_{L}$ be an $m$-bridge diagram of $L,$ connected. See
Fig. \ref{Bvertices}. Usually, in the theory of knots, we do not draw the
dotted lines. We assume that they are under the plane $\mathbb{R}^{2}%
\times\{0\}$ and so the diagram can be seen as a finite collection $T=\left\{
t_{1},\cdots,t_{m}\right\}  $ of disjoint arcs (\textit{no closed curves}) in
the plane $\mathbb{R}^{2}\times\{0\}$. Select\textit{\ }a point $B_{i}$ in
each one of the \textit{regions} of the complement of $D_{L}$ in
$\mathbb{R}^{2}\times\{0\}$. For the unbounded component, set $B_{0}=\infty$.%

\begin{figure}
[h]
\begin{center}
\includegraphics[
height=2.0612in,
width=3.7292in
]%
{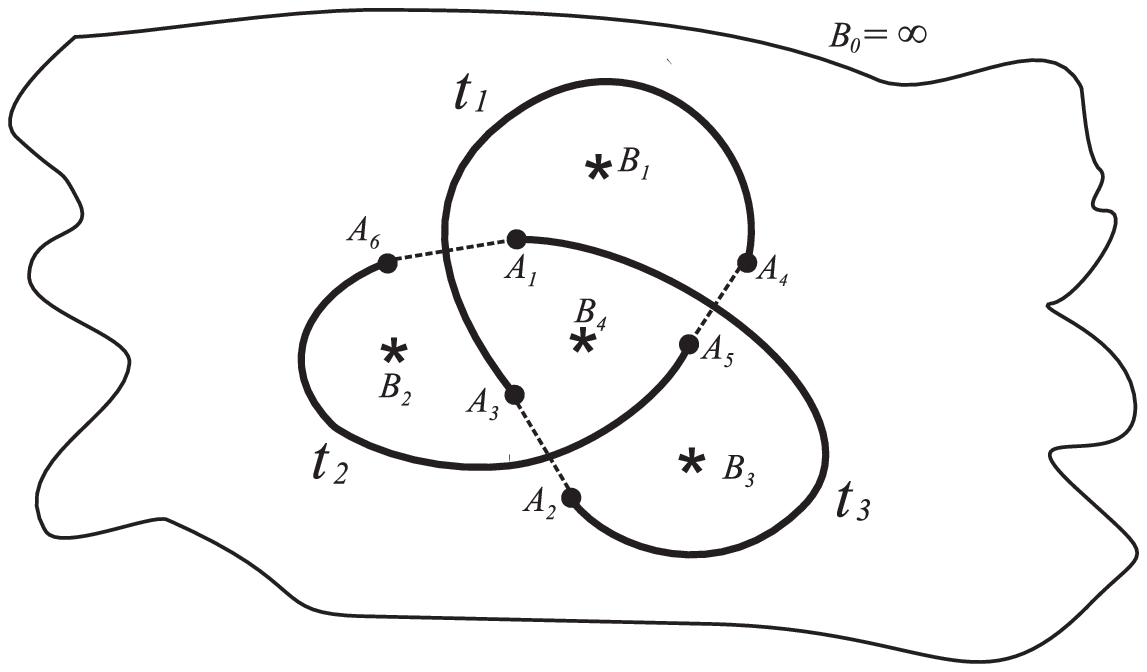}
\caption{Regions of $\mathbb{R}^{2}\backslash D_{L}$.}%
\label{Bvertices}%
\end{center}
\end{figure}

The boundary points of the arcs $t_{i}$ of the link-diagram $D_{L}$ will be
the $A$-vertices of our $m$-butterfly diagram.

Each $A$-vertex belongs to the boundary of two regions. The vertices denoted
by $B$ (and selected before) in these two regions will be called
the\textit{\ neighboring }$B$\textit{\'{}s} \textit{of the }$A$\textit{-vertex}. (In Fig. \ref{fig6}, the neighboring
$B$\'{}s of the $A$-vertex $A_{1}$ are $B_{1}$ and $B_{4}$.)

The diagram $D_{L}$ contains also \textit{crossings}. A crossing involves an
overarc and two adjacent \textit{arcs. }

We now proceed to construct an $m$-butterfly diagram $\left(  R,T\right)  .$
Joint every $A$-vertex of $D_{L}$ with its two neighboring $B$\'{}s 
by arcs lying in the regions to which these two belong. Thus we obtain a set
of arcs $R$ and we assume that these arcs have mutually disjoint interiors
among themselves and with the arcs of $T$.%

\begin{figure}
[h]
\begin{center}
\includegraphics[
height=2.0484in,
width=3.5268in
]
{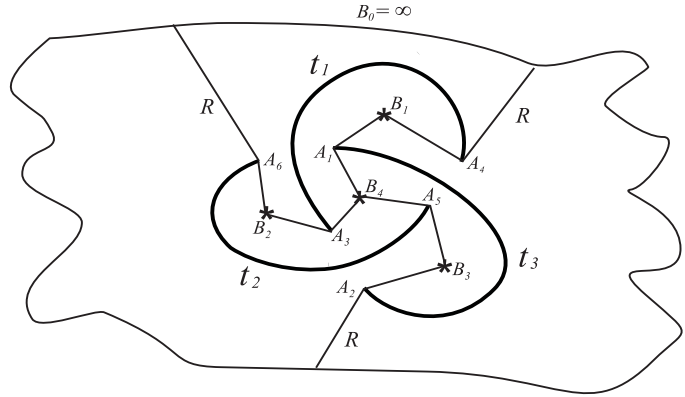}%
\caption{$m$-butterfly from a link-diagram $D_{L}.$}
\label{fig6}
\end{center}
\end{figure}

Then $\left(  R,T\right)  $ is an $m$-butterfly diagram, where $m$ is the
number of arcs in $T$. The graph $R$ is connected because the diagram $D_{L}$
is connected. Moreover, $S^{2}\backslash R$ is a disjoint union of open
$2$-cells, namely, open neighbourhoods of the arcs $t_{i}$ of the diagram.
Finally the $A$-vertices are bivalent in $R.$ Note that there are no
$E$-vertices in $R$. The set of $B$-vertices of the $m$-butterfly diagram is
the set of $B$\'{}s.

Applying the butterfly-link algorithm found in the proof of Theorem
\ref{teobridge} to $\left(  R,T\right)  $ (here the graph $\Gamma$ is the set
of dotted lines), it is easy to see that $L=L(R,T)$, see Fig. \ref{fig7}.
\begin{figure}
[h]
\begin{center}
\includegraphics[
height=2.0315in,
width=3.4844in
]%
{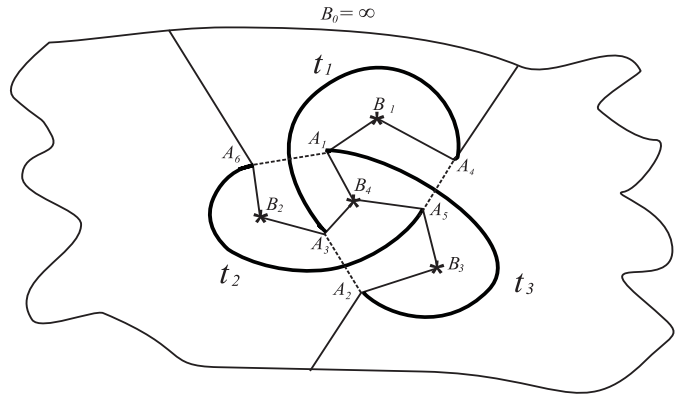}%
\caption{Link $L\left(  R,T\right)  $ from an $m$-butterfly diagram.}%
\label{fig7}
\end{center}
\end{figure}

\end{proof}

We will refer to the algorithm described in the proof of Theorem \ref{linkbut}
as the \textit{link-butterfly algorithm.}

\begin{definition}
The minimum $m$ among all possible $m$-butterfly diagrams of a given link $L$
is called the\emph{\ butterfly number} of $L$ and it is denoted by $m(L).$
\end{definition}

For example, the butterfly number of the trivial knot is 1, see Fig.
\ref{fig2} c; the butterfly number of any rational knot is 2, see the
Introduction and Fig. \ref{fig16}; and the butterfly number of the borromean
rings is 3, see Fig. \ref{fig21}.

\section{\label{Move}Trunk-reducing Move}

Our goal in the next two sections is to prove that the butterfly and bridge
number of knots and links coincide. To achieve this we need to know how to
reduce the number of trunks obtained by the link-butterfly algorithm described
in Section \ref{LinkBut}.

Let $L$ be a link and $(R,T)$ be an $m$-butterfly diagram of $L$ found by the
link-butterfly algorithm. We observed that it does not produce $E$-vertices.
Actually it produces only two types of butterflies. The butterflies, coming
from trunks that are overarcs, have more than two $A$-vertices, as illustrated
in Fig. \ref{fig8}a. The butterflies coming from trunks that are not overarcs
(simple arcs) have only two $A$-vertices. We call this last kind of
butterflies \textit{simple butterflies. }They have the shape illustrated in
Fig. \ref{fig8}b.%
\begin{figure}
[h]
\begin{center}
\includegraphics[
height=1.2785in,
width=4.1594in
]%
{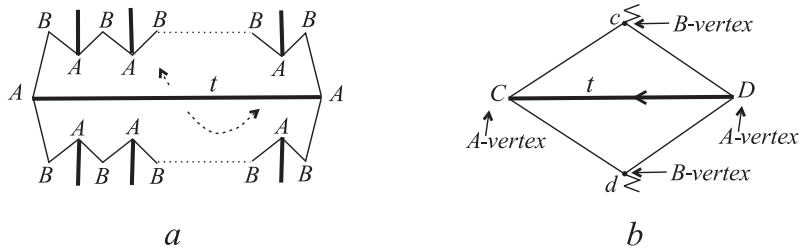}%
\caption{a. A non simple butterfly. \ \ \ \ \ \ \ \ \ b. A simple butterfly.
\ \ \ }
\label{fig8}
\end{center}
\end{figure}

We also notice that the value of $m$ in the $m$-butterfly diagram $(R,T)$ is
just the number of all arcs in the chosen link diagram.

So given a connected $m$-bridge diagram of a link $L$, together with the
$m$-butterfly diagram $(R,T)$ representation of $L$ produced using the
link-butterfly algorithm, a natural question arises:

Is it possible to make some \textit{moves} on the $m$-butterfly diagram
$(R,T),$ in such a way, that we find a different $l$-butterfly diagram
$(R^{\prime},T^{\prime})$ of $L$ but with $l<m$? We will see that we can do
this, but at the expense of producing $E$-vertices.

Now we will show how to decrease the number of butterflies in a given
$m$-butterfly. More specifically, trunks of simple butterflies will be
converted into $E$-vertices.

Let $P$ be the simple butterfly of $(R,T)$ shown in Fig. \ref{fig9}, where the
vertex labeled by $D$ at the rightmost part of the Figure is an $A$- or
$E$-vertex and the vertex labeled by $C$ at the leftmost part of the Figure is
an $A$-vertex.

\begin{figure}
[ptbh]
\begin{center}
\includegraphics[
height=1.192in,
width=3.636in
]
{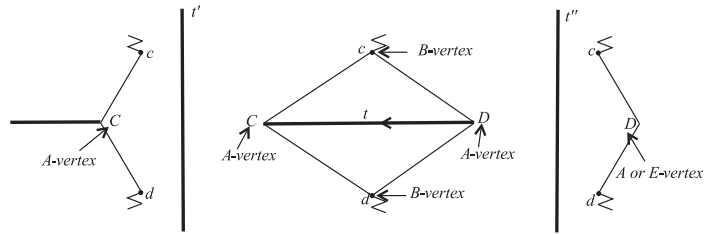}%
\caption{Simple butterfly}%
\label{fig9}
\end{center}
\end{figure}

For simplicity, we will assume here that the closed $3$-cell of $(R,T)$ is
below the paper. Consider the notations given in Fig. \ref{fig9}. On both
sides of the trunk $t^{\prime}$ we draw the arcs $C^{\prime}c$ and $C^{\prime
}d$ (See Fig. \ref{fig10}). We use the same notation on both sides, to
indicate that they match by the "reflection" along $t^{\prime}.$ Inside the
$3$-cell we trace an arc $C^{\prime}D$ getting two triangles $C^{\prime}cD$
and $C^{\prime}dD$ that have only two edges on the boundary of $(R,T).$ These
triangles together with the wings $CcD$ and $CDd$ of the simple buttterfly on
the boundary of $\partial\mathbf{B}$ can be considered as the boundary of a
pyramid with quadrilateral base $CcC^{\prime}d$ and apex $D.$%
\begin{figure}
[h]
\begin{center}
\includegraphics[
height=1.4631in,
width=4.0308in
]
{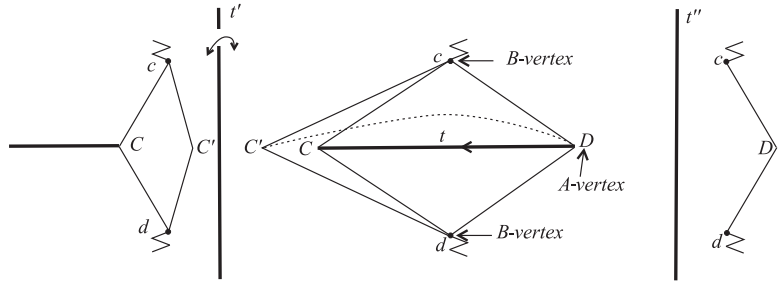}
\caption{First step}
\label{fig10}
\end{center}
\end{figure}

Now we cut the pyramid $CcC^{\prime}dD$ out of the ball $(R,T)$ (Fig.
\ref{fig11}) and glue it on the other side of $t^{\prime}$ to the
corresponding base $CcC^{\prime}d,$ thus obtaining finally Fig. \ref{fig12}.%

\begin{figure}
[ptb]
\begin{center}
\includegraphics[
height=1.4631in,
width=4.0427in
]
{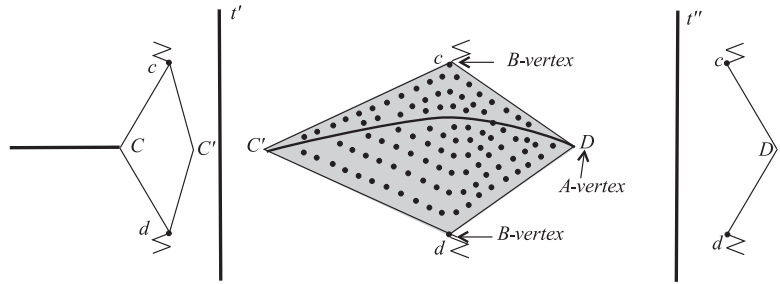}
\caption{Cutting off $CcC^{\prime}dD$}%
\label{fig11}
\end{center}
\end{figure}

\begin{figure}
[h]
\begin{center}
\includegraphics[
height=1.4631in,
width=4.0291in
]
{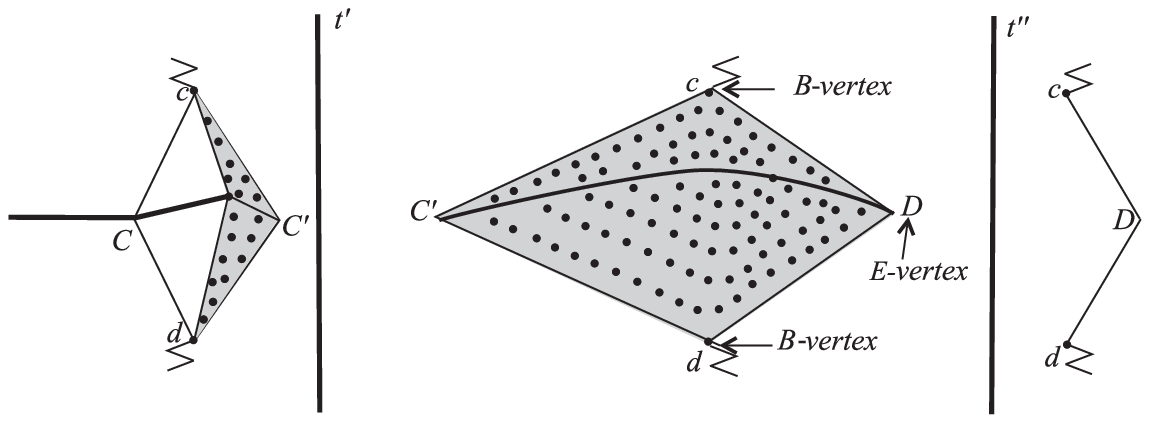}%
\caption{Gluing $CcC^{\prime}dD$}%
\label{fig12}
\end{center}
\end{figure}

\begin{figure}
[h]
\begin{center}
\includegraphics[
height=1.6859in,
width=3.2006in
]
{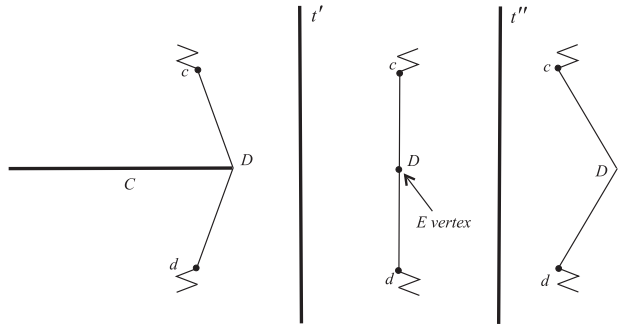}%
\caption{A new $E$-vertex}%
\label{fig13}
\end{center}
\end{figure}

In this way the simple butterfly has been substituted by two edges $cD$ and
$dD$ and a $E$-vertex $D$ (see Fig. \ref{fig13}). In this process the graph
$R$ becomes a connected graph $R_{1}$ such that $S^{2}\backslash R_{1}%
=S^{2}\backslash\left(  R\cup\bar{P}\right)  ,$ where $P$ is the simple
butterfly of $(R,T)$ shown in Fig. \ref{fig9}. Hence $S^{2}\backslash R_{1}$
consists of a disjoint union of open $2$-cells. Therefore $R_{1}$ together
with the new collection of trunks $T_{1}$ is in fact a butterfly diagram.
Moreover, note that the new $E$-vertex $D$ is bivalent in $R$. See the center
part of Fig. \ref{fig13}. The point $C$ is not any more an $A$-vertex
(actually, it is now a point in the interior of a trunk$,$ (See the leftmost
part of Fig. \ref{fig13}), and notice that the valence of the $B$-vertices of
the simple butterfly $P$ decreases by one. Recall that a vertex of $R$ is a
$B$-vertex iff $p^{-1}\left(  p\left(  v\right)  \right)  $ contains at least
one non-bivalent vertex, where $p:\mathbf{B}^{3}\rightarrow M(R,T)$ is the
identification map. So, it is possible that some of the $B$-vertices are not
any more $B$-vertices but it is not a problem since they can be considered as
any other point in $R_{1}$ that is not a vertex.

The transition from Fig. \ref{fig9} to Fig. \ref{fig13} will be referred to as
a \textit{\textquotedblleft trunk-reducing move\textquotedblright.}

We have proved the following theorem

\begin{theorem}
A trunk-reducing move converts an $m$-butterfly diagram of a link $L$ into an
$\left(  m-1\right)  $-butterfly diagram of the same link $L$. The new diagram
gets a new $E$-vertex in place of a simple butterfly.
\end{theorem}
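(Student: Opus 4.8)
The plan is to establish three assertions: that the output $(R_1,T_1)$ of the move is again a legitimate butterfly diagram, that its number of trunks is exactly one less than that of $(R,T)$, and that it represents the same link $L$, with the new vertex $D$ being an $E$-vertex. The geometric heart of the argument is already encoded in the construction: cutting the pyramid $CcC'dD$ out of $\mathbf{B}^3$ and regluing it to the opposite side of the trunk $t'$ is an operation that, after passing to the quotient, changes neither $M(R,T)$ nor the isotopy class of the image of the trunks. So I would first set up the move as a homeomorphism and then separately audit the combinatorial data $(R_1,T_1)$.

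First I would verify that $(R_1,T_1)$ satisfies the definition of an $(m-1)$-butterfly. The graph $R_1$ produced by the move satisfies $S^2\setminus R_1=S^2\setminus(R\cup\bar P)$, where $P$ is the simple butterfly being removed; since $R$ is connected and $S^2\setminus R$ is a disjoint union of open $2$-cells, absorbing the closed face $\bar P$ into its neighbours leaves $R_1$ connected and $S^2\setminus R_1$ still a disjoint union of open $2$-cells. The trunk $t'$ of $P$ is destroyed and replaced by the two edges $cD$, $dD$ meeting at the single new vertex $D$, so $T_1$ has exactly $m-1$ components, which is condition (iii). For the vertex conditions (i) and (ii) I would check each vertex type: the new vertex $D$ is bivalent in $R_1$, its only incident edges being $cD$ and $dD$, and, as explained in the construction, it is an $E$-vertex; the former $A$-vertex $C$ is no longer a vertex but an interior point of a trunk, so it imposes no condition; every $B$-vertex of $P$ either remains a $B$-vertex with valence lowered by one or becomes an ordinary interior point of $R_1$, and in neither case does it violate (i); and no vertex that is neither $A$, $B$ nor $E$ is created. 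Hence $(R_1,T_1)$ is an $(m-1)$-butterfly, and by Theorem \ref{teos3} its quotient is again $S^3$ carrying a knot or link.

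Next I would prove that this knot or link is the original $L$. The decisive observation is that the regluing of the pyramid is performed along exactly the reflection of the wings $CcD$ and $CDd$ that already defines the equivalence relation $\simeq$ on the simple butterfly. Consequently the cut-and-reglue map is supported inside a ball neighbourhood of $\bar P$, is the identity outside it, and descends to a homeomorphism of the quotient that fixes $M(R,T)\setminus p(\bar P)$. Under this homeomorphism the image of the trunks is carried to the image of the new trunks up to isotopy: the only change is that the overarc $p(t')$ coming from the simple butterfly is flattened onto the level of the graph $\Gamma$ and routed through the new $E$-vertex $D$. Concretely, I would confirm this by running the butterfly-link algorithm of Theorem \ref{teobridge} on both $(R,T)$ and $(R_1,T_1)$ and checking that the two presentations of $K^{\ast}$ differ by an ambient isotopy that only slides the arc associated with $t'$. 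Therefore $L(R_1,T_1)=L(R,T)=L$.

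The step I expect to be the main obstacle is the careful justification that the cut-and-reglue is genuinely a homeomorphism rather than merely a combinatorial recipe: one must check that the pyramid $CcC'dD$ is embedded in $\mathbf{B}^3$, that it meets $\partial\mathbf{B}^3$ in precisely the closed face $\bar P$ so that excising it leaves a ball and the two interior triangles $C'cD$, $C'dD$ become new boundary faces, and that the base $CcC'd$ matches its reflected copy across $t'$ so that regluing is well defined. A secondary subtlety is the bookkeeping at $D$ when $D$ is an $E$-vertex rather than an $A$-vertex, where one must confirm that the incoming edges still leave $D$ bivalent and that its equivalence class is handled correctly; and one must ensure that lowering the valence of the neighbouring $B$-vertices never produces a forbidden vertex, so that the output still falls under the definition of an $m$-butterfly.
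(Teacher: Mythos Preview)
Your proposal is correct and ultimately lands on the same verification the paper uses: the paper's entire proof is the single sentence ``It is enough to apply the butterfly-link algorithm to both butterfly diagrams,'' i.e.\ exactly your step of running Theorem~\ref{teobridge} on $(R,T)$ and $(R_1,T_1)$ and observing the resulting bridge presentations are isotopic. All of your combinatorial bookkeeping about $R_1$ being connected, $S^2\setminus R_1$ being cells, $D$ being a bivalent $E$-vertex, and the fate of $C$ and the $B$-vertices is present in the paper too, but it is placed in the discussion \emph{preceding} the theorem rather than in the proof.

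The one genuine difference is emphasis: you foreground the cut-and-reglue of the pyramid as a homeomorphism of quotients and treat the algorithmic check as a confirmation, whereas the paper treats the pyramid construction purely as a description of the move and never invokes it as a homeomorphism argument in the proof. Your route is slightly heavier (you have to worry about embeddedness of the pyramid, the $D$ an $E$-vertex subcase, etc.), but it gives a conceptual reason the link is unchanged; the paper's route is lighter but leaves the isotopy between the two outputs of the algorithm to inspection of Figures~\ref{fig9} and~\ref{fig13}.
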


\begin{proof}
It is enough to apply the butterfly-link algorithm to both butterfly diagrams.
Apply it to Figures \ref{fig9} and \ref{fig13}.
\end{proof}

\begin{example}
\label{EjemploTrebol}Let us apply trunk-reducing moves to the $4$-butterfly
diagram of the trefoil knot illustrated in Fig. \ref{fig14}. There, we have
four trunks: $t_{1},t_{2},t_{3},t_{4},$ and six $B$-vertices; $a,b,c,d,e,f$
corresponding to each region of the diagram of the knot. For simplicity, we do
not draw the edges joining $A$- and $B$-vertices of the corresponding
butterfly $(R,T).$

\begin{figure}
[h]
\begin{center}
\includegraphics[
height=1.9527in,
width=1.4707in
]
{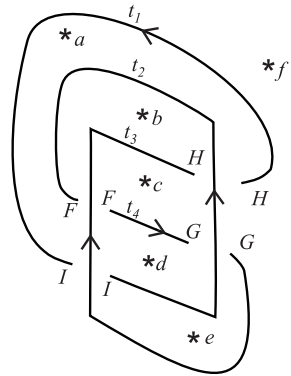}
\caption{A 4-butterfly representation of the trefoil knot.}
\label{fig14}
\end{center}
\end{figure}

The arcs $t_{1}$ and $t_{4}$ correspond to simple butterflies. Therefore,
performing two trunk-reducing moves in $t_{1}$ and $t_{4}$ (in this order)$,$
the trunks $t_{1}$ and $t_{4}$ are reduced to the $E$-vertices labeled by
$E_{1}$ and $E_{4}$, respectively (see Fig. \ref{fig15}). The diagram of Fig.
\ref{fig15} is not yet a butterfly diagram because it contains too many
vertices. Indeed, under the application of the trunk-reducing moves the
$B$-vertices of the original diagram become bivalent vertices of the new
diagram that are not $A$-vertices nor $E$-vertices. Therefore we can delete
them, thus obtaining the $2$-butterfly diagram of Fig. \ref{fig16}.
\end{example}

\begin{figure}
[ptb]
\begin{center}
\includegraphics[
height=1.9654in,
width=1.4207in
]
{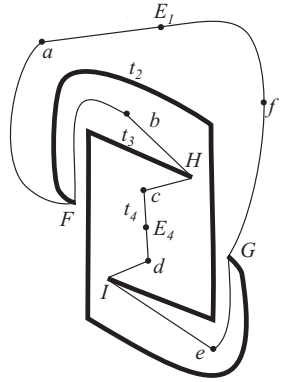}
\caption{Diagram with new $E$-vertices. }
\label{fig15}
\end{center}
\end{figure}

\begin{figure}
[ptb]
\begin{center}
\includegraphics[
height=1.8239in,
width=2.9659in
]
{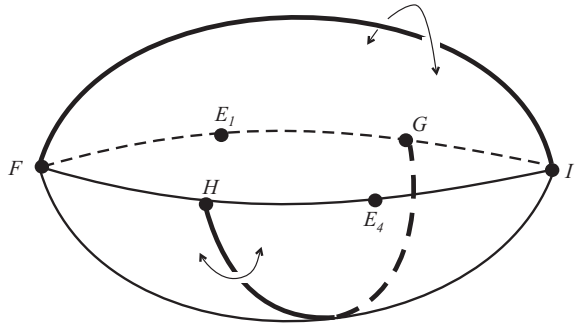}%
\caption{A 2-butterfly representation of the trefoil knot.}%
\label{fig16}
\end{center}
\end{figure}

A $4$-butterfly diagram for the trivial link with two components is depicted
in Fig. \ref{fig17}.

\begin{figure}
[h]
\begin{center}
\includegraphics[
height=2.6228in,
width=3.0032in
]
{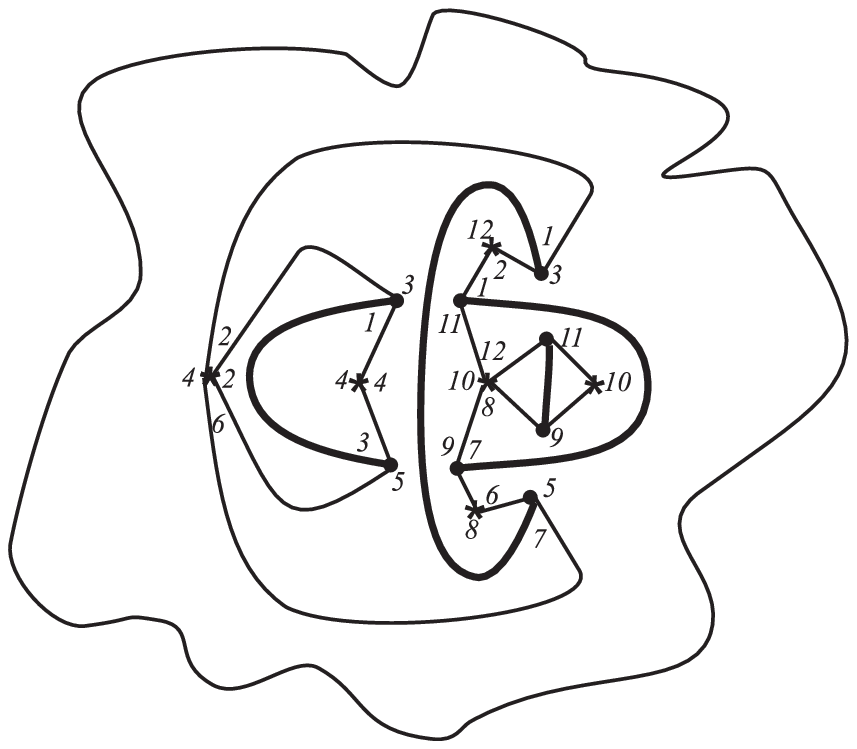}%
\caption{A $4$-butterfly for the trivial link with two components.}%
\label{fig17}
\end{center}
\end{figure}

Applying one trunk-reducing move we get Fig. \ref{fig1}.
\begin{figure}
[h]
\begin{center}
\includegraphics[
height=1.9459in,
width=2.4517in
]
{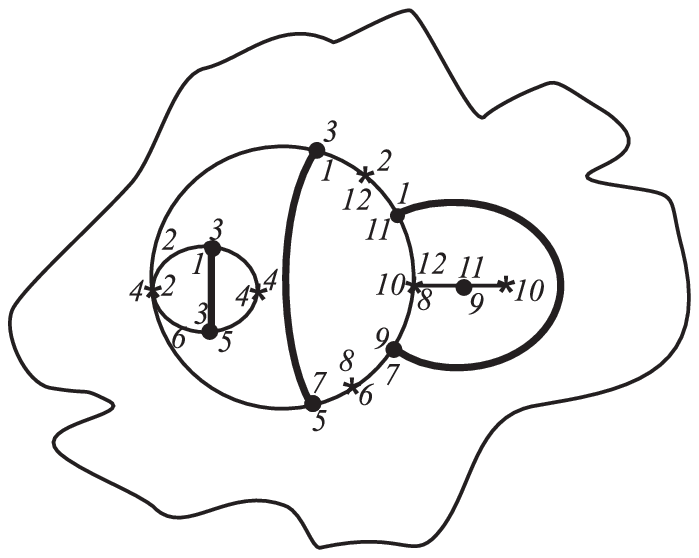}
\caption{A $3$-butterfly for the trivial link with two components, obtained by
a trunk-reducing move.}
\label{fig1}
\end{center}
\end{figure}
A second trunk-reducing move produces the $2$-butterfly diagram representing
the trivial link with two components shown in Fig. \ref{fig18}a. In Fig.
\ref{fig18}b we apply the butterfly-link algorithm to the $2$-butterfly to
recover the link.

\begin{figure}
[ptbh]
\begin{center}
\includegraphics[
height=1.7689in,
width=3.6513in
]
{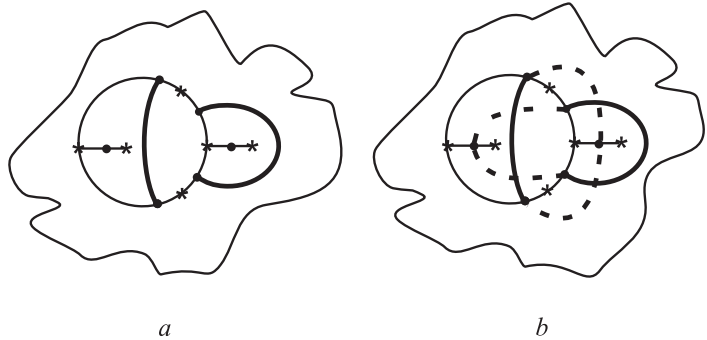}%
\caption{ A $2$-butterfly diagram representing the trivial link with two
components.}
\label{fig18}
\end{center}
\end{figure}

\begin{remark}
\label{inverso} The inverse of a trunk-reducing move can certainly be applied
to any $E$-vertex in an $m$-butterfly diagram to increase the number of
trunks. In this way it is always possible to obtain a butterfly diagram
without $E$-vertices from any given butterfly diagram of a link.
\end{remark}

\section{\label{number}The Bridge Number and the Butterfly Number}

Let us remark that the knot-diagram of the trefoil knot given in Example
\ref{EjemploTrebol} corresponds to a $2$-bridge presentation of it and by
applying trunk-reducing moves we obtained a $2$-butterfly diagram of the
trefoil knot. Actually this is a general result, and we want to show that for
any link $L,$ the butterfly number equals the bridge number, i.e., $m(L)=b(L)$.

\begin{theorem}
\label{TeoremaBridgeigualmariposa}For any link $L,$ $b(L)=m(L).$
\end{theorem}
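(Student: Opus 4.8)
The inequality $m(L) \le b(L)$ is the more concrete of the two, and I would tackle it by combining the link-butterfly algorithm with the trunk-reducing move. Start with a minimal bridge presentation of $L$, namely a connected $b(L)$-bridge diagram $D_L$ (its existence and connectedness guaranteed by Lemma \ref{diagram}). Applying the link-butterfly algorithm of Theorem \ref{linkbut} produces a butterfly diagram $(R,T)$ whose trunks are precisely the arcs of $D_L$, so its butterfly number equals the \emph{total} number of arcs, namely $2\,b(L)$ overarcs-plus-underarcs. The key observation is that the $b(L)$ underarcs are exactly the simple butterflies (trunks that are not overarcs), and each one can be absorbed into an $E$-vertex by a single trunk-reducing move without changing the link. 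Performing these $b(L)$ moves successively converts the $2\,b(L)$-butterfly diagram into a $b(L)$-butterfly diagram of the same link $L$. Hence $m(L) \le b(L)$.

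For the reverse inequality $b(L) \le m(L)$, I would argue via the bridge presentation exhibited in Theorem \ref{teobridge}. Take a minimal butterfly diagram $(R,T)$ realizing $m = m(L)$, so $T$ has exactly $m$ components. Theorem \ref{teobridge} asserts that
\[
\bigl(\Gamma\times\{1/2\},\; T\times\{1\}\cup(\partial T\times[1/2,1])\bigr)
\]
is an $m$-bridge presentation of $L(R,T) = L$, in which the $m$ trunks furnish the $m$ overbridges and the components of $\Gamma$ furnish the underarcs. Since this is a genuine $m$-bridge diagram of $L$, the bridge number can be no larger: $b(L) \le m = m(L)$.

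Combining the two inequalities yields $b(L) = m(L)$, as claimed. The step I expect to be the main obstacle is verifying cleanly that the trunk-reducing moves in the first half can all be carried out — that is, confirming that after the link-butterfly algorithm every underarc really does yield a \emph{simple} butterfly (one with exactly two $A$-vertices), so that the move of Section \ref{Move} applies to each, and that the resulting object after $b(L)$ moves is still a legitimate $m$-butterfly diagram in the sense of the Definition (only $A$-, $B$-, $E$-vertices, with $A,E$ bivalent). The bookkeeping here rests on the already-established fact that a trunk-reducing move preserves the represented link while decreasing the trunk count by one, together with the earlier remark that superfluous bivalent non-$A$, non-$E$ vertices may simply be deleted; the reverse inequality, by contrast, is essentially immediate from Theorem \ref{teobridge}.
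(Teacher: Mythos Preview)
Your approach matches the paper's: prove $b(L)\le m(L)$ directly from Theorem \ref{teobridge}, and prove $m(L)\le b(L)$ by applying the link-butterfly algorithm to a minimal bridge diagram and then eliminating the simple butterflies with trunk-reducing moves.

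Two small corrections are in order. First, the trunks produced by the link-butterfly algorithm are the arcs of the diagram (maximal pieces between undercrossings), not the $b(L)$ overarcs plus the $b(L)$ underarcs: an underarc that passes under several crossings is broken into several simple arcs, so in general there are more than $2\,b(L)$ trunks and more than $b(L)$ trunk-reducing moves to perform. This does not affect the argument, since what matters is that after removing \emph{all} simple butterflies only the $b(L)$ overarc trunks remain. Second, the obstacle you flag is real and the paper addresses it explicitly: a trunk-reducing move on a simple butterfly requires that one of its two boundary $A$-vertices be adjacent (through the neighbouring butterfly) to an $A$-vertex rather than an $E$-vertex, so the moves cannot be done in arbitrary order. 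The paper's remedy is to orient each component $L_i$, start at an overarc, and tour $L_i$ performing the moves on the simple arcs in the order they are encountered; this guarantees that at each step the simple butterfly being reduced abuts an overarc trunk on the side just traversed, so the hypothesis of the move is always satisfied.
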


\begin{proof}
The fact that $b(L)\leq m(L)$ is a corollary of Theorem \ref{teobridge}.

Now we will show that $m(L)\leq b(L)$ for any link $L.$

Let $D_{L}$ be a link-diagram of $L$, such that it satisfies the conditions of
Lemma \ref{diagram} and\ the number of bridges (or overarcs) is $b(L)$.

We can apply the link-butterfly algorithm to $D_{L}$ to obtain an
$m$-butterfly diagram $\left(  R,T\right)  $ without $E$-vertices, where $m$
is the number of arcs of $D_{L}$ (Theorem \ref{linkbut}).

Next apply trunk-reducing moves to $\left(  R,T\right)  $ in order to trade
simple butterflies by pairs of edges and $E$-vertices. We have to be careful
because we cannot apply the trunk-reducing moves at random. (Remember that to
be able to apply a trunk-reducing move we need that one of the two
neighbouring vertices be an $A$-vertex.) To have a consistent order of
application for a component $L_{i}$ of $L$, we start with an overarc of the
projection of $L_{i}$ (granted by Proposition \ref{diagram}) and we tour
$L_{i},$ following some orientation, performing trunk-reducing moves to the
simple butterflies in the same order that they are found. In this way we
eliminate all the simple arcs belonging to $L_{i}$ and convert them into
$E$-points. We do this for every component of $L.$ Therefore all simple
butterflies disappear (converted into $E$-vertices) and there remains only the
trunks coming from overarcs. Since the number of overarcs of $D_{L}$ is $b(L)$
the new butterfly is a $b(L)$-butterfly diagram. Then $m(L)\leq b(L).$
\end{proof}

\begin{example}
Consider the 3-bridge presentation of the borromean rings given in Fig.
\ref{fig20}.
\begin{figure}
[ptbh]
\begin{center}
\includegraphics[
height=2.4251in,
width=2.4408in
]
{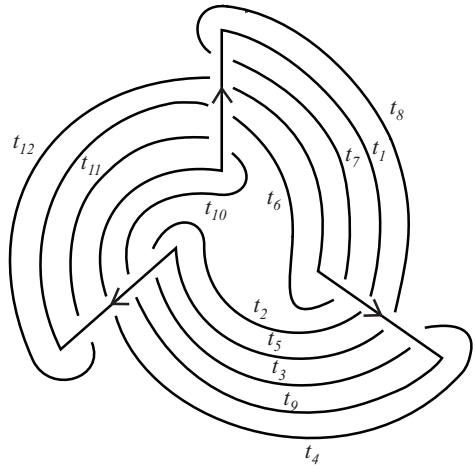}
\caption{ A 3-bridge presentation.}
\label{fig20}
\end{center}
\end{figure}
Make trunk-reducing moves first to the sequence $t_{2},t_{3},t_{4}$. Next to
the sequence $t_{6},t_{7,}t_{8},$ and finally to the sequence $t_{10}%
,t_{11},t_{12}.$ You will get the 3-butterfly diagram of Fig. \ref{fig21},
where those trunks have been exchanged by the $E$-vertices $A,B,C,D,E,F,G,H,$
and $I$, respectively. The vertices $o$, $\infty$, $1$, $2 $, $3$, $4$, $5$,
$6$, $7$, $8$, $9$,$10$, $11$, and $12$ are $B$-vertices and all of them
belong to the orbit of $\left\{  o\right\}  $, under the equivalence relation
$\simeq.$

\begin{figure}
[ptbh]
\begin{center}
\includegraphics[
height=2.411in,
width=2.6482in
]
{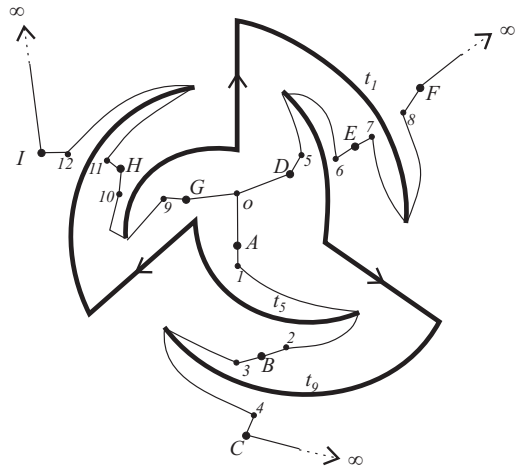}
\caption{ A 3-butterfly.}
\label{fig21}
\end{center}
\end{figure}

Another way to visualize this 3-butterfly diagram is shown in Fig.
\ref{fig22}, where, for simplicity, we do not mark the $B$-vertices, except
$o$ and $\infty.$
\end{example}

\begin{figure}
[ptbh]
\begin{center}
\includegraphics[
height=2.399in,
width=2.2485in
]
{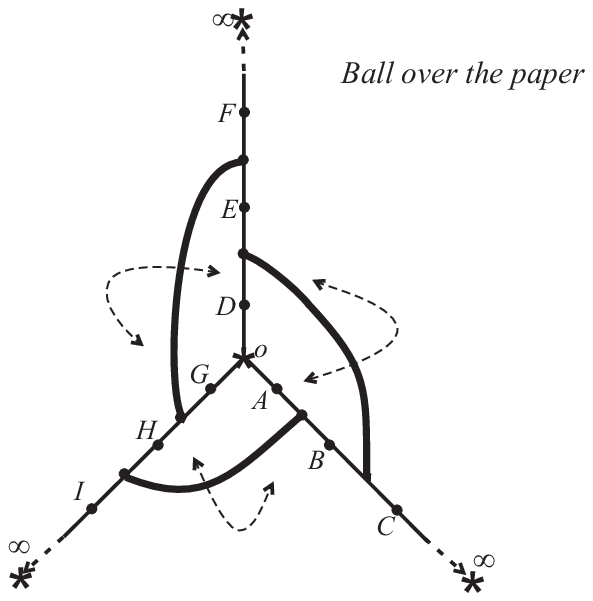}%
\caption{A 3-butterfly diagram for de borromean rings (without some
B-vertices).}
\label{fig22}
\end{center}
\end{figure}

\section{Conclusions}

We have proved that any link can be represented as an $m$-butterfly. We
defined the butterfly number of a link and we proved that the butterfly number
equals the bridge number of a link. Therefore it is feasible to study the
$m$-bridge links via $m$-butterflies. For each $2$-bridge link the associated
$2$-butterfly allow us to visualize the corresponding rational number. For
example, in Fig. \ref{fig2}a we have a 2-butterfly that represents the
rational knot $5/2.$ For the 3-bridge links, as we have announced in the
introduction, it is possible to associate a set of 3 rational numbers to each
3-butterfly. For more details about the way to assign a set of three rational
numbers to a $3$-butterfly diagram see \cite{To}, \cite{HMTT4}.

For example, in Fig. \ref{fig23} we show the diagrams of two 3-butterflies,
$(R_{1},T_{1})$ and $(R_{2},T_{2}),$ with the associated set of rational
numbers.
\begin{figure}
[ptb]
\begin{center}
\includegraphics[
height=2.3903in,
width=3.9729in
]%
{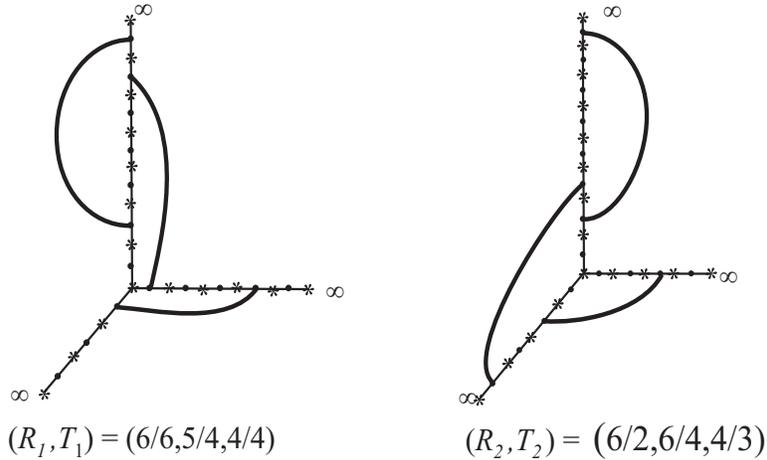}
\caption{Two butterfly diagrams for the knot $8_{20}$, with the associate
rational numbers.}
\label{fig23}%
\end{center}
\end{figure}

The two diagrams are different, however $L\left(  R_{1},T_{1}\right)  $ and
$L\left(  R_{2},T_{2}\right)  $ are equivalent $3$-bridge presentation of the
knot $8_{20}$ with bridge (and butterfly) number $3$. To exhibit the
equivalence between the bridge presentations $L\left(  R_{1},T_{1}\right)  $
and $L\left(  R_{2},T_{2}\right)  $ we modify the presentation of the two
$3$-butterfly diagrams $(R_{1},T_{1})$ and $(R_{2},T_{2})$, as shown in Fig.
\ref{fig28}, on the left. In the center we have the link diagrams obtained
when we close the $3$-butterflies.

\begin{figure}
[ptb]
\begin{center}
\includegraphics[
height=3.426in,
width=4.545in
]
{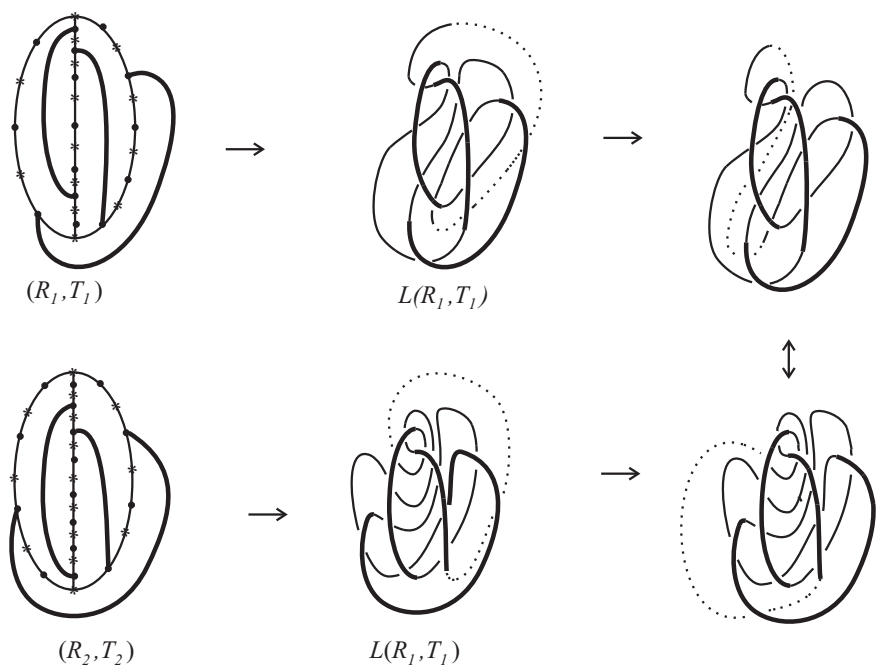}%
\caption{Two non equivalent 3-butterfly diagram with equivalent 3-bridge
presentation.}%
\label{fig28}%
\end{center}
\end{figure}

Then we move the dotted arc as shown in each diagram.

This raises the problem \textit{of finding a set of moves} in a butterfly
diagram connecting diagrams representing the same link. This is left as an
open problem.

\end{document}